 \newtheorem{thm}{Theorem}[section]
 \newtheorem{cor}[thm]{Corollary}
 \newtheorem{lem}[thm]{Lemma}
 \theoremstyle{definition}
 \newtheorem{defn}[thm]{Definition}
 \newtheorem{conj}[thm]{Conjecture}
 \theoremstyle{remark}
 \newtheorem{rem}[thm]{Remark}
 \newtheorem{ex}[thm]{Example}
 \numberwithin{equation}{section}
 \newcommand{\Spec}{\operatorname{Spec}}
 \newcommand{\Rat}{\operatorname{Rat}}
 \newcommand{\PGL}{\operatorname{PGL}}
 \newcommand{\Gal}{\operatorname{Gal}}
\newlength{\defbaselineskip}
\begin{document}

\title{An Algebraic Proof of Thurston's Rigidity for Maps With a Superattracting Cycle}

\author{Alon Levy}

\classification{37P45, 14D10}

\begin{abstract}We study rational self-maps of $\mathbb{P}^{1}$ whose critical points all have finite forward orbit. Thurston's rigidity theorem states that outside
a single well-understood family, there are finitely many such maps over $\mathbb{C}$ of fixed degree and critical orbit length. We provide an algebraic proof of
this fact for tamely ramified maps for which at least one of the critical points is periodic. We also produce wildly ramified counterexamples.\end{abstract}

\maketitle

\section{Introduction}

The behavior of a rational self-map of $\mathbb{P}^{1}$ of degree $d \geq 2$ at its critical points plays an important role in its global dynamics. Specifically, we
study a special set of maps:

\begin{defn}\label{PCF}A rational map $\varphi:\mathbb{P}^{1} \to \mathbb{P}^{1}$ is \textbf{postcritically finite} (PCF) if all of its critical points have finite
forward orbit.\end{defn}

PCF maps have attracted some attention from complex dynamists~\cite{Koc, BEKP}, who relate their combinatorial properties to their dynamical properties. At the same
time, arithmetic dynamists have studied their special Galois orbit properties. In brief, if we fix a point $z \in \mathbb{P}^{1}(K)$ for some arithmetically
interesting field $K$, the absolute Galois group $\Gal(\overline{K}/K)$ will act on the infinite tree of preimages of $z$. Conjecturally the Galois action has more
or less full image, but if $\varphi$ is PCF, then the image is much smaller~\cite{AHM, BJ1, BJ2}. If one views rational maps as analogous to elliptic curves, as is
the approach used in~\cite{ADS}, then PCF maps are thus somewhat analogous to elliptic curves with complex multiplication.

Our goal in this paper is to study the PCF maps from the point of view of the moduli space of rational maps on $\mathbb{P}^{1}$. We write rational maps by their
coordinates, $$\varphi(z) = \frac{f(z)}{g(z)} = \frac{a_{d}z^{d} + \ldots + a_{0}}{b_{d}z^{d} + \ldots + b_{0}}$$ The space of morphisms is a subset of the space
$$(a_{d}:\ldots:a_{0}:b_{d}:\ldots:b_{0}) \in \mathbb{P}^{2d+1}$$ defined by the open affine condition that $\gcd(f, g) = 1$; we call this open affine subspace
$\Rat_{d}$. The geometry of $\varphi$ is preserved under coordinate-change; in particular, if $A \in \PGL(2)$ and $\varphi$ is PCF, then so is its conjugation
$A\varphi A^{-1}$. Thus we need to quotient the space by $\PGL(2)$-conjugation. We set $\Rat_{d}//\PGL(2) = \mathrm{M}_{d}$; as established in~\cite{Sil96, PST,
Lev1}, the quotient $\mathrm{M}_{d}$ is geometric and the stabilizer groups in $\PGL(2)$ are finite.

In particular, $\dim\mathrm{M}_{d} = 2d-2$. Conversely, when $\varphi$ is tamely ramified it has $2d-2$ critical points, counted with multiplicity. Although the
condition that $\varphi$ is PCF cannot be expressed as a finite number of equations, if we specify the size of the forward orbit of each critical point, then we
obtain an algebraic equation. Hence we obtain $2d-2$ equations. It is therefore natural to ask if those equations always intersect in a finite number of points.

Unfortunately, in one special case, the $2d-2$ equations will intersect in a curve, rather than in finitely many points:

\begin{defn} The map $\varphi: \mathbb{P}^{1} \to \mathbb{P}^{1}$ is called a \textbf{Latt\`{e}s map} if there is an elliptic curve $E$, a morphism $\alpha:E\to E$,
and a finite separable map $\pi$ such that the following diagram commutes:

\begin{equation*}
  \xymatrix@R+2em@C+2em{
  E \ar[r]^{\alpha} \ar[d]_{\pi} & E \ar[d]^{\pi} \\
  \mathbb{P}^{1} \ar[r]_{\varphi} & \mathbb{P}^{1}
  }
\end{equation*}

If we choose $\pi$ to be the projection by $P\sim -P$, then $\alpha$ must be of the form $\alpha_{m, T}: P \mapsto mP + T$ where $T \in E[2]$.\end{defn}

The points of a Latt\`{e}s map $\varphi_{E, \alpha}$ with finite forward image are precisely the points that come from the torsion points on $E$. Moreover, because
$\alpha$ is unramified, the critical points come from the critical points of $\pi$, and those are necessarily torsion points; hence, all Latt\`{e}s maps are PCF.
Conversely, Latt\`{e}s maps $\varphi_{E, \alpha}$ with a fixed (more precisely, continuously varying) $\alpha$ form a curve in $\mathrm{M}_{m^{2}}$ according to the
$j$-invariants of $E$. For a more complete treatment, see~\cite{ADS}.

However, as shown by Thurston, the Latt\`{e}s maps are the only counterexample to the expectation that the equations defining PCF maps intersect in finitely many
points. Thurston studied PCF maps based on underlying combinatorial data:

\begin{defn}The \textbf{critical portrait} of a PCF map $\varphi$ is a directed graph on its critical points and their forward images in which the edge $x \mapsto
y$ occurs if and only if $\varphi(x) = y$. This includes the case in which $x = y$.\end{defn}

All Latt\`{e}s maps with the same $\alpha$ have the same critical portrait. We have,

\begin{thm}(Thurston's Rigidity~\cite{DH, BBLPP}) Over $\mathbb{C}$, each critical portrait, except the portraits defining the Latt\`{e}s curves, admits only
finitely many maps $\varphi \in \mathrm{M}_{d}$, which are defined over $\overline{\mathbb{Q}}$; moreover, the intersection of the equations defining the portrait
is a reduced scheme, i.e. the equations intersect transversely.\end{thm}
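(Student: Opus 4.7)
The plan is to show that at any tamely ramified, non-Latt\`es PCF map $\varphi \in \mathrm{M}_{d}$ with at least one periodic critical cycle, the Zariski tangent space at $\varphi$ to the scheme cut out by the critical portrait equations is trivial. This would yield simultaneously the finiteness (zero-dimensionality) of the scheme, its reducedness (the asserted transversality), and rationality over $\overline{\mathbb{Q}}$, since the defining equations have coefficients in $\mathbb{Q}$ and a zero-dimensional $\mathbb{Q}$-scheme has $\overline{\mathbb{Q}}$-points.

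First I would set up the deformation theory. An infinitesimal deformation of $\varphi$ is a global section of $\varphi^{*}T_{\mathbb{P}^{1}} \cong \mathcal{O}(2d)$, giving a $(2d+1)$-dimensional space; modding out by the $3$-dimensional $\PGL_{2}$ infinitesimal action gives $T_{[\varphi]}\mathrm{M}_{d}$ of dimension $2d-2$. For each critical point $c_{i}$ of ramification index $e_{i}$, I would introduce an auxiliary deformation $c_{i}'$ of its position and impose $e_{i}$ linear conditions on the pair $(\psi, c_{i}')$: that $c_{i}+\epsilon c_{i}'$ remain a critical point of multiplicity $e_{i}-1$ (this accounts for $e_{i}-1$ equations) and that the portrait edge $\varphi(c_{i})=c_{j}$ deform consistently to $\varphi_{\epsilon}(c_{i}+\epsilon c_{i}')=c_{j}+\epsilon c_{j}'$ (one equation). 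Eliminating the auxiliary $c_{i}'$ and invoking Riemann--Hurwitz $\sum(e_{i}-1)=2d-2$ produces exactly $2d-2$ linear equations on the $(2d-2)$-dimensional space $T_{[\varphi]}\mathrm{M}_{d}$.

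The crux is showing these equations are independent. The critical portrait is a disjoint union of rooted trees whose roots sit on periodic cycles. Along each tree edge $c_{i}\to c_{j}$, the portrait equation expresses $c_{j}'$ in terms of the jets of $\psi$ at $c_{i}$, triangulating the system along the trees and reducing the content to cycle closure relations. On each periodic cycle of length $n$ the portrait equations iterate to a single closure equation whose coefficient involves the cycle multiplier $(\varphi^{n})'$; the presence of at least one critical point on the cycle forces this multiplier to vanish, making the closure relation nondegenerate and uniquely determining the cyclic jets of $\psi$. Back-substitution down the trees would then force $\psi \in T_{[\varphi]}(\PGL_{2} \cdot \varphi)$, i.e.\ $\psi = 0$ in $T_{[\varphi]}\mathrm{M}_{d}$.

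The main obstacle is characterizing the degeneracies of this system in general, since these must correspond exactly to the Latt\`es exception. There the elliptic isogeny $\alpha$ on $E$ gives a portrait-preserving one-parameter family parametrized by the $j$-invariant of $E$, so the constraint system acquires a one-dimensional kernel from the elliptic structure; ruling out kernels of any other provenance requires a careful algebraic analysis of the cycle closure relations across all portraits, and this is where the argument's real technical bite lies. The wildly ramified counterexamples reflect the failure of Riemann--Hurwitz $\sum(e_{i}-1)=2d-2$ in residue characteristic $p \mid e_{i}$, which breaks the dimension count and leaves room for genuine positive-dimensional families of PCF maps.
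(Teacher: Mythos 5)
The theorem you are trying to prove is not proved in the paper: it is stated with citations to Douady--Hubbard and Buff et al.\ and used as a black box. The paper explicitly notes that the proof in \cite{DH} is topological (via Teichm\"uller theory) and ``has portions that cannot yet be generalized beyond $\mathbb{C}$.'' What the paper actually proves, by its own novel algebraic technique, is the narrower Theorem~\ref{main}: rigidity for tamely ramified maps that have a superattracting cycle. So there is no ``paper's own proof'' of the statement to compare against; I will instead assess your attempt and relate it to the argument the paper does give.

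Your attempt has a structural gap that prevents it from proving the stated theorem. You restrict at the outset to PCF maps ``with at least one periodic critical cycle,'' and the nondegeneracy of your cycle-closure relations rests entirely on the observation that a cycle containing a critical point has multiplier $0$. But Thurston's rigidity, as stated, applies to \emph{all} non-Latt\`es PCF maps, including those whose critical points are all strictly preperiodic. For such maps (e.g.\ Misiurewicz polynomials like $z^{2}+i$, or more pointedly the Latt\`es maps themselves) no postcritical cycle is superattracting, the multipliers are nonzero, and your closure-relation argument gives you nothing. So at best your approach addresses exactly the same superattracting case the paper handles, not the general statement. Even within that restricted setting, the critical independence claim is unestablished: you explicitly acknowledge that ``ruling out kernels of any other provenance requires a careful algebraic analysis\ldots and this is where the argument's real technical bite lies.'' That is precisely the content of Thurston's theorem, and leaving it unresolved means the proposal is a proof outline, not a proof. (The actual hard step in the standard treatments is the contraction of the Thurston pushforward $\varphi_{*}$ on quadratic differentials, or equivalently the strict contraction of the Thurston pullback on Teichm\"uller space---a fact whose known proofs require complex analysis.)

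It is worth noting that your high-level strategy---linearize the portrait constraints and show the resulting system on the $(2d-2)$-dimensional tangent space of $\mathrm{M}_{d}$ has trivial kernel---is in the spirit of Epstein's deformation-theoretic reformulation, and is quite different from the technique used in the paper for the superattracting case. The paper works globally, not infinitesimally: it forms a weighted-graded coordinate ring recording critical points, critical values, and coefficients, shows the PCF ideal's generators have top-degree parts equal to pure powers $\xi_{i}^{D}$ of the critical-value variables when a superattracting cycle is present, and concludes finiteness by reducing to the classification of tame branched covers by their branch locus. That argument establishes only finiteness (not transversality), but it is genuinely algebraic and extends to positive characteristic, which your tangent-space approach, dependent as it is on Riemann--Hurwitz and on the postcritical cycle structure over $\mathbb{C}$, would not.
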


The importance of Thurston's rigidity goes beyond PCF maps. Indeed, it is useful when discussing the general behavior of periodic points of $\varphi$, that is
points $z$ for which $\varphi^{n}(z) = z$. First, we recall:

\begin{defn}The \textbf{multiplier} of a map $\varphi$ at a point $z$ of period $n$, that is a point for which $\varphi^{n}(z) = z$, is the eigenvalue of the
induced action on tangent spaces, $(\varphi^{n})'(z)$; it is conjugation-invariant, and invariant for all points in a single periodic cycle.\end{defn}

\begin{defn}In analogy with the terminology of attracting and repelling periodic points when $\varphi$ is defined over a valued field, we say a periodic cycle is
\textbf{superattracting} if its multiplier is $0$, an algebraic rather than an analytic condition. Observe that a cycle is superattracting if and only if it
contains a critical point.\end{defn}

Epstein~\cite{Eps2} uses rigidity to prove a bound on the number of periodic cycles whose multipliers are in the unit disk. In addition, McMullen~\cite{McM} uses
rigidity to prove a deep structure result on $\mathrm{M}_{d}(\mathbb{C})$:

\begin{thm}(McMullen) Let $\Lambda_{n}: \mathrm{M}_{d} \to \mathbb{A}^{k_{n}}$ be the map sending $\varphi$ to the symmetric functions in the period-$n$
multipliers; here $k_{n}$ is just the dimension of the target space. For sufficiently large $n$, $$\Lambda_{1} \times \ldots \times \Lambda_{n}:
\mathrm{M}_{d}(\mathbb{C}) \to \mathbb{A}^{k_{1} + \ldots + k_{n}}(\mathbb{C})$$ is finite-to-one away from the Latt\`{e}s curves.\end{thm}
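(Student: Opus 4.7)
The plan is to argue by contradiction, pushing any positive-dimensional fiber of the multiplier map into a situation where Thurston's rigidity applies. Suppose the theorem fails, so that for every $n$ the map $\Lambda_1 \times \cdots \times \Lambda_n$ has a positive-dimensional fiber meeting $\mathrm{M}_d(\mathbb{C}) \setminus \{\text{Latt\`{e}s}\}$. Since $\mathrm{M}_d$ is Noetherian and each $\Lambda_n$ is algebraic, the descending chain of fibers stabilizes, yielding a single positive-dimensional irreducible subvariety $V \subset \mathrm{M}_d(\mathbb{C})$, not contained in any Latt\`{e}s curve, on which every $\Lambda_n$ is constant. The aim is to show $V$ cannot exist.

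The constancy of all $\Lambda_n$ along $V$ is much stronger than it sounds. No periodic multiplier can cross $1$ or bifurcate along $V$, so $V$ lies in the $J$-stable locus of $\mathrm{M}_d(\mathbb{C})$ (Ma\~n\'e--Sad--Sullivan). Consequently the rational maps parametrized by $V$ are pairwise quasiconformally conjugate via a holomorphic motion of their Julia sets, and this motion is encoded by a family of $\varphi_t$-invariant Beltrami differentials $\mu_t$ on $\mathbb{P}^1$. I would then decompose $\mu_t$ by support: the Fatou-set contribution is bounded by a constant depending only on $d$, via Sullivan's no-wandering-domains theorem and the classification of periodic Fatou components (attracting, parabolic, superattracting, Siegel, Herman). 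Hence for $V$ of positive dimension either the Fatou contribution saturates a known finite-dimensional Teichm\"{u}ller space, or $\mu_t$ is supported as an invariant measurable line field on the Julia set.

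Now Thurston's rigidity enters. In the Fatou case one can degenerate within $\overline{V} \subset \mathrm{M}_d$ so that attracting and parabolic basins collapse onto their critical orbits, producing a limiting PCF map; since critical-orbit combinatorics is locally constant along $V$, a whole curve of PCF maps appears, which Thurston's theorem forbids outside the Latt\`{e}s locus. In the Julia case one must show that a non-Latt\`{e}s map cannot admit an invariant measurable line field on its Julia set; here one again forces a PCF degeneration (using that the line field pulls back under iteration and interacts with the postcritical set) and applies Thurston's rigidity to conclude the limit is Latt\`{e}s, so $V$ itself was Latt\`{e}s.

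The hard part is the final step: translating the purely analytic datum of an invariant Beltrami differential into an algebraic PCF statement to which Thurston's theorem can be applied. The Fatou-component case is essentially bookkeeping, but the invariant-line-field case on the Julia set is genuinely delicate, and it is precisely here that one needs not just the finiteness part of Thurston's rigidity but also its transversality statement, in order to rule out tangential obstructions to the degeneration argument.
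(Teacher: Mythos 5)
The paper does not prove this theorem; it cites McMullen~\cite{McM} and uses the statement only as background and motivation (the author's own contribution in this direction is Theorem~\ref{auto}, the observation that McMullen-type control of multipliers implies rigidity, together with Corollary~\ref{sil}). There is therefore no in-paper proof to compare your sketch against, and it has to be judged on its own.

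Your outline does capture the spine of McMullen's argument: pass to a stable algebraic family $V$ via Ma\~n\'e--Sad--Sullivan, use quasiconformal conjugacy along $V$ to produce a $\varphi$-invariant Beltrami differential, and split by whether that differential is carried on the Fatou set or on the Julia set. But the sketch fails exactly where you concede that it does: the Julia-supported case. Your proposal there --- ``force a PCF degeneration'' and then invoke Thurston's rigidity --- is not how that case is actually handled, and it is not clear it can be made to work: the existence of an invariant line field on a positive-measure Julia set does not obviously produce a limiting PCF map to which Thurston applies. McMullen's argument in that case is measure-theoretic, a Lebesgue-density-point and rescaling argument showing that a nontrivial algebraic family carrying an invariant line field on the Julia set must be of Latt\`es type; Thurston rigidity is not the tool used at that step. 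The Fatou-case sketch also skips a nontrivial step: degenerating basins to produce a limiting PCF map requires controlling the behavior of the family at the boundary of $\mathrm{M}_d$, where maps may degenerate out of $\mathrm{M}_d$ entirely, and you would need to justify that critical-orbit combinatorics really is locally constant along $V$. In short, you have a correct roadmap of the standard strategy with the technical core explicitly deferred rather than proved.
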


Since the space $\mathrm{M}_{d}$ is defined over $\mathbb{Z}$, and the PCF locus is defined over $\mathbb{Z}$ as well, it is reasonable to ask if rigidity and
McMullen's result can be extended to arithmetically interesting fields. Although they extend to number fields by the Lefschetz principle, the proofs of both
rigidity and the derivation of McMullen's result from rigidity employ transcendental techniques, and therefore do not extend to characteristic $p$. Finding
arithmetic analogs of both results is an active research topic in arithmetic dynamics, with some partial results due to Epstein~\cite{Eps}, who uses heights to
prove rigidity for a special class of polynomial maps, and Hutz and Tepper~\cite{HT}, who prove that McMullen's result is true for a generic polynomial over
$\mathbb{Z}$ and compute the degree of the finite-to-one map.

Upon looking at various wildly ramified cases, the author's hopes that the two results would be true verbatim in characteristic $p$ were dashed. For example, if $k$
is a field of characteristic $p$, then the curve $$\varphi_{t}(z) = z^{mp} + tz, t \in k$$ is a non-isotrivial curve in $\mathrm{M}_{mp}$ since the multiplier at
$0$ varies, but is PCF since the only critical point is $\infty$ and is clearly fixed. In general, if $p < d$ and $p \nmid d$ then $z^{d} + tz^{p}$ only has $0$ and
$\infty$ for critical points and both are fixed, and can be seen to be a curve in $\mathrm{M}_{d}$ by direct examination of the action of each $A \in \PGL(2)$ on
the coefficients.

For a counterexample to McMullen, we can take any sufficiently high-dimension family with constant derivative, i.e. $\varphi(z) = cz + f(z^{p})/g(z^{p})$ where $c$
is a constant, which is again wildly ramified. For this family to not map to a single point in $\mathrm{M}_{d}$, we require $f(z)/g(z)$ to have degree $2$ or higher
and thus can be used when $p \mid d$ and $p < d$, but not when $p = d$, and indeed Silverman~\cite{Sil96} proved that $\Lambda_{1}$ is an isomorphism from
$\mathrm{M}_{2}$ to $\mathbb{A}^{2}$ over $\mathbb{Z}$.

If we think of rigidity as the statement that $2d-2$ equations over a space of dimension $2d-2$ should intersect in finitely many points, then its failure in the
wildly ramified case is not surprising, since there are fewer than $2d-2$ critical points even counted with multiplicity. It is therefore reasonable to propose,

\begin{conj}\label{arith}Except for the Latt\`{e}s curve, the finiteness portion of rigidity holds in all tamely ramified cases.\end{conj}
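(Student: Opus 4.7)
The plan is to reformulate the conjecture as an intersection-theoretic statement on $\mathrm{M}_{d}$. After passing to a finite cover on which the $2d-2$ critical points can be marked individually, a fixed critical portrait imposes $2d-2$ polynomial equations of the shape $\varphi^{a_{i}+b_{i}}(c_{i})=\varphi^{a_{i}}(c_{i})$, one per critical point, where $a_{i}$ is the tail length and $b_{i}$ the period. Since $\dim\mathrm{M}_{d}=2d-2$, the conjecture reduces to showing that these equations cut out a scheme of dimension $0$ away from the Latt\`es locus; equivalently, I need to rule out any smooth irreducible curve $T$ in $\mathrm{M}_{d}$ along which the portrait is preserved, except for the Latt\`es families.

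Assume such a $T$ exists. The critical points $c_{i}(t)$ move algebraically along $T$, and I would differentiate the postcritical relations at a chosen base point to obtain a linear system relating $\dot\varphi$ and the $\dot c_{i}$. Tame ramification guarantees that the products of $\varphi'$ along forward orbits that enter this system are nonzero, so no unexpected degeneracies arise from the characteristic. The aim is then to show that every solution of the linearised system comes from an infinitesimal $\PGL(2)$-action, forcing $T$ to be constant in $\mathrm{M}_{d}$.

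For the superattracting case treated by the paper, I would exploit the fact that a periodic critical point $c_{i_{0}}$ of period $b_{i_{0}}$ satisfies $(\varphi^{b_{i_{0}}})'(c_{i_{0}})=0$. Differentiating $\varphi^{b_{i_{0}}}(c_{i_{0}})=c_{i_{0}}$ then eliminates the contribution of $\dot c_{i_{0}}$ and yields a direct constraint on $\dot\varphi$ evaluated along the superattracting cycle. Combining this with the constraints coming from the other $2d-3$ critical points should suffice, by a rank argument on the resulting linear system, to force $\dot\varphi$ to be infinitesimally conjugate to $\varphi$. For the general case of the conjecture, where every critical point is strictly pre-periodic, my strategy is to perturb $\varphi_{t}$ along $T$ until two pre-periodic critical orbits collide, producing a limiting map with a periodic critical point, and then to invoke the superattracting case.

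The principal obstacle is ruling out the Latt\`es alternative in this reduction. An algebraic degeneration argument shows only that the limit of a PCF family is again PCF; it does not guarantee that the limit acquires a periodic critical point in any useful sense, and the Latt\`es curves themselves already demonstrate that positive-dimensional PCF families exist without any critical point becoming fixed. Producing a purely algebraic characterisation of Latt\`es maps at the level of infinitesimal postcritical data, strong enough to exclude every other positive-dimensional PCF family, appears to be the essential difficulty, and is presumably why the paper restricts to the superattracting setting.
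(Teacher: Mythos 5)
The statement you are attempting is Conjecture~\ref{arith}, which the paper explicitly leaves open: the author writes ``It is currently beyond the author's means to prove Conjecture~\ref{arith},'' and proves only the superattracting special case (Theorem~\ref{main}). So there is no proof in the paper to compare against for the full conjecture, and your proposal does not close it either, as you yourself acknowledge at the end.

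For the superattracting sub-case your approach is genuinely different from the paper's. You linearise the postcritical relations along a hypothetical positive-dimensional family $T$ and try to show every tangent direction comes from the infinitesimal $\PGL(2)$-action; this is in the spirit of Thurston's transversality argument or Epstein's quadratic-differential methods. The paper instead passes to a marked cover $Y$ of $\Rat_d$, introduces a grading (critical points weight $1$, critical values weight $e$, coefficients $a_i, b_j$ weighted to make the defining relations homogeneous), shows via Lemma~\ref{topdegree} that the top-degree part of $\varphi^m(\zeta_i)=\varphi^n(\zeta_i)$ is simply $\xi_i^D$, and concludes finiteness of the critical value set; finiteness of the maps then follows from tame branched-cover theory (Lemma~\ref{crit}). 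A crucial difference in strength: your linearisation, if successful, would establish transversality of the intersection, but the paper points out that transversality is known to \emph{fail} in positive characteristic (e.g.\ $z^2+c$ with $c^3+2c^2+c+1$, discriminant $-23$). So a clean rank argument on the linearised system cannot work uniformly mod $p$; at best you could hope for a finiteness statement, and you would need to reformulate the strategy accordingly.

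The most serious gap is your proposed reduction from the general pre-periodic case to the superattracting one. Colliding two pre-periodic critical orbits along $T$ does not, in general, create a periodic critical point: merging tails produces a map whose critical points may all remain strictly pre-periodic, and nothing forces the collision to close a cycle through a critical point. Worse, the limit of a family in $\mathrm{M}_d$ may degenerate out of $\mathrm{M}_d$ (degree drop), so Theorem~\ref{main} could not be applied to the limit even if it did acquire a superattracting cycle. The paper itself flags this barrier as essential to its method: when no critical point is periodic, the PCF equations are already homogeneous in the chosen grading, the top-degree simplification produces nothing new, and the author states that ``a full generalization to tamely ramified non-Latt\`{e}s maps is unlikely'' by these techniques. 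Your proposal correctly identifies the Latt\`{e}s alternative as an obstruction, but the reduction step that is supposed to route around it does not hold up.
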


In positive characteristic, transversality fails, even in large characteristic. Indeed, for quadratic polynomials of the form $z^{2} + c$, the critical point is
$0$, and the condition that the critical point has exact period $3$ is $c^{3} + 2c^{2} + c + 1$, a polynomial of discriminant $-23$. It is an open question whether,
just for quadratic polynomials, the set of primes for which transversality fails has zero density. For our purposes the question then is whether we can prove
finiteness, and in the sequel, we will take rigidity to mean the finiteness portion of Thurston's rigidity.

It is currently beyond the author's means to prove Conjecture~\ref{arith}. The proof of rigidity in~\cite{DH} is topological, and has portions that cannot yet be
generalized beyond $\mathbb{C}$. However, in some cases, a direct algebraic proof is feasible. The case of polynomials is already known:

\begin{thm}\label{poly}\cite{Lev3} Rigidity holds for polynomials whenever their reduction after any coordinate change is still tamely ramified, which is the case
if they are of degree $d < p$ or if they are compositions or iterates of polynomials of degree less than $p$.\end{thm}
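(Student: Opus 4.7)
The plan is to leverage Thurston's rigidity in characteristic zero via a lifting argument, with the tame ramification hypothesis supplying the flatness that makes the lift preserve the dimension of the PCF locus; this route seems cleaner than a direct algebraic attack via dynatomic resultants. To set up, I would normalize to monic centered form $\varphi(z) = z^{d} + a_{d-2}z^{d-2} + \cdots + a_{0}$, presenting $\Poly_{d}/\!/\mathrm{Aff}$ as a finite quotient of $\mathbb{A}^{d-1}$ in the coordinates $(a_{0},\ldots,a_{d-2})$. Under tame ramification $\varphi'$ has degree $d-1$, so the finite critical scheme $\{\varphi' = 0\}$ has length $d-1$, which together with $\infty$ accounts for all $2d-2$ critical multiplicities required by Riemann--Hurwitz. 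A fixed critical portrait $P$ then yields $d-1$ equations $\varphi^{n_{i}}(c_{i}) = \varphi^{m_{i}}(c_{i})$ in the $a_{j}$, cutting out a closed subscheme $V_{P}$; writing $U$ for the open locus of polynomials all of whose $\PGL(2)$-conjugates remain tame, it suffices to prove that $V_{P} \cap U$ is zero-dimensional.

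Suppose for contradiction that, over a field $k$ of characteristic $p$, $V_{P} \cap U$ contains a positive-dimensional component $W$, and parameterize an irreducible curve in $W$ by a DVR $R$ to obtain a one-parameter family of tame PCF polynomials $\varphi_{t}$ of portrait $P$. Tameness forces $\{\varphi'_{t} = 0\}$ to be an \'etale degree-$(d-1)$ cover of $\Spec R$, so the critical sections $c_{i}(t)$ and all their finite iterates $\varphi_{t}^{k}(c_{i}(t))$ vary \'etale-locally, and the equations $\varphi_{t}^{n_{i}}(c_{i}(t)) - \varphi_{t}^{m_{i}}(c_{i}(t)) = 0$ define a subscheme flat over $R$. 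Lifting the data through the Witt ring $W(k)$ and then to $\mathbb{C}$ via Lefschetz, the same equations continue to vanish identically on the lifted curve, producing a positive-dimensional family of PCF polynomials in $\mathrm{M}_{d}(\mathbb{C})$ with portrait $P$. Since no polynomial is a Latt\`{e}s map (having a totally ramified fixed point at $\infty$), this contradicts Thurston's rigidity over $\mathbb{C}$.

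The hard step is the flatness claim inside the lift: without it, the positive-dimensional component in the special fiber could be purely vertical, appearing only in characteristic $p$ and invisible on the generic fiber. Tameness is exactly what rules this out, since in the wild case the critical scheme can absorb extra multiplicity upon specialization and expressions such as $\varphi^{n}(c) - \varphi^{m}(c)$ can degenerate into identities --- precisely the mechanism behind the wild counterexamples $z^{d} + tz^{p}$ noted in the introduction. Making the flatness rigorous will likely demand an explicit analysis of the discriminant of $\varphi'_{t}$ over $R$, or Newton-polygon control to ensure that no critical orbit escapes to $\infty$ upon reduction. Once the statement is established when $d < p$, where tameness is automatic and universal, the cases of compositions and iterates follow from the stability of tame ramification under those operations.
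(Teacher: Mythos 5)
Your proposal takes a genuinely different route from the paper, but it has a real gap at precisely the step you flag as ``the hard step.'' The paper does not prove Theorem~\ref{poly} by lifting to $\mathbb{C}$; it instead proves the stronger Theorem~\ref{main} directly over $\mathbb{Z}$, with no appeal to Thurston's rigidity in characteristic zero. The mechanism is a grading on a ring $R$ encoding the coefficients $a_i, b_j$, the marked critical points $\zeta_i$ (degree $1$), and the critical values $\xi_i$ (degree $e$); one shows the PCF ideal is generated by homogeneous elements whose top-degree parts are simply $\xi_i^D$, so that the image ring $S/(\xi_1^D,\ldots)$ is visibly a finite module, and Corollary~\ref{integral1} gives the same statement over $\mathbb{Z}$. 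Finiteness is read off directly, and the theory of tame covers (Lemma~\ref{crit}) converts finiteness of the critical value configuration into finiteness of the maps.

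Your argument, by contrast, needs the subscheme $\mathcal{V}_P$ cut out over $W(k)$ (or $\mathbb{Z}$) by $\varphi^{n_i}(c_i) = \varphi^{m_i}(c_i)$ to be flat near the hypothetical positive-dimensional component $W$, and you do not establish this; it is, in substance, the theorem. Without flatness, $W$ can be a purely vertical component over $p$ with no trace in the generic fiber, and the Lefschetz step produces nothing to contradict. Your justification is the \'etaleness of the critical scheme, but this is false as stated: $\{\varphi' = 0\}$ is finite flat of degree $d-1$, not \'etale, over the base whenever some critical point has ramification index exceeding $2$ (e.g.\ $z^d$, tame for $p \nmid d$, has a length-$(d-1)$ fat point at $0$); one must pass to a finite cover to mark critical points, as the paper does with $Y$. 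More seriously, even where the critical scheme is \'etale, that says nothing about flatness of the locus cut out by the iterated orbit equations, which is what you actually need. The paper's $\mathbb{Z}$-integrality (Corollary~\ref{integral1}) is precisely the content of your flatness claim, so the proposed reduction to $\mathbb{C}$ does not save work --- it relabels the entire difficulty as an unproven flatness statement, and it is exactly this kind of Lefschetz shortcut that the introduction warns ``does not extend to characteristic $p$.''
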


Ingram argues in private communication that a modification of the argument in~\cite{Ing} could produce a second proof of Theorem~\ref{poly} over a finite field of
characteristic $p > d$ using his theory of critical heights of polynomials.

We will prove a stronger result, using purely algebraic methods:

\begin{thm}\label{main}Rigidity holds for tamely ramified rational maps with a superattracting cycle.\end{thm}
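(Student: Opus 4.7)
The plan is to normalize the map using the superattracting cycle and then analyze the resulting equations on a moduli space of dimension $2d-2$. Let $\varphi$ be a tamely ramified PCF map of degree $d$ with superattracting cycle $C = \{c_{0}, \ldots, c_{n-1}\}$, where $c_{0}$ is a critical point of local degree $e \geq 2$ (with $p \nmid e$ by tameness). Using the three-dimensional $\PGL(2)$-action, I first place $c_{0} = \infty$; the remaining freedom is spent placing $c_{1} = 0$ when $n \geq 2$, then $c_{2} = 1$ when $n \geq 3$, or, when the cycle is shorter, on one of the other critical points or on a leading coefficient. Writing $\varphi = f/g$ with $\deg f = d$ and $\deg g = d - e$, this produces a $(2d-2)$-dimensional affine parameter space $N$ of normalized maps.

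Next I would express the critical portrait conditions as polynomial equations on $N$. The critical divisor is cut out by the Wronskian $W(f,g) = f'g - fg'$, and by tameness its zeros are $2d-2$ critical points with multiplicities matching the portrait. For each critical point $c$ not on the cycle $C$, the combinatorial type of its forward orbit yields one polynomial equation $\varphi^{k}(c) = \varphi^{\ell}(c)$ on $N$ for suitable $k > \ell \geq 0$ read off the portrait, while the orbit data along $C$ itself is absorbed by the normalization. The expected common zero locus of this system is finite.

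I would then establish finiteness by a combination of properness and a tangent-space argument at the superattracting cycle. For properness, the normalized cycle pins down enough of the map that a limit in $N$ of portrait-preserving maps cannot leave $\Rat_{d}$; the tameness hypothesis at the remaining critical points prevents them from colliding with $C$ in a family, which would be the only other way to leave the portrait stratum. For the dimension count, at any PCF map of fixed portrait the tangent space is governed by the vanishing of the multiplier along $C$, producing first-order obstructions that rule out a continuous deformation. Combining properness with the tangent-space computation yields a zero-dimensional and therefore finite locus.

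The main obstacle I anticipate is this tangent-space step, because in characteristic $p$ the portrait equations need not intersect transversely even when the intersection is finite, as the paper itself notes for $z^{2} + c$. Hence the superattracting cycle must be used essentially, not merely as a convenient normalization: one must show that the zero multiplier along $C$, together with tameness of ramification at each critical point, precludes a positive-dimensional family even in the absence of transversality. This is where the wildly ramified counterexamples $z^{mp} + tz$ escape, since their superattracting fixed point at $\infty$ is wildly ramified and no longer provides the needed obstruction.
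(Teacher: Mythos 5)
Your normalization step (using $\PGL(2)$ to place the superattracting cycle at $\infty$, etc.) and your identification that the superattracting cycle must be used essentially are both consistent with the paper's setup. But the core finiteness mechanism you propose --- ``properness plus a tangent-space argument'' --- is a genuine gap, and you essentially admit as much in your final paragraph. You observe that transversality fails in characteristic $p$ even for $z^2 + c$, and then say one must show the zero multiplier and tameness ``preclude a positive-dimensional family even in the absence of transversality''; that is not an argument but a restatement of the theorem. A tangent-space computation at a point can at most detect whether a scheme is reduced there, and when transversality fails (as it does) it gives no control over finiteness of the locus. Your ``properness'' claim is also unsubstantiated: you assert that a limit of portrait-preserving maps cannot leave $\Rat_d$, but the paper in fact works on a compactification $Y$ that includes degenerate maps with $\gcd(f,g) \neq 1$, precisely because the portrait equations do not cut out a proper subscheme of $\Rat_d$.

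The missing idea is algebraic, not infinitesimal. The paper introduces a grading on the ring generated by coefficients, critical points $\zeta_i$, and critical values $\xi_i$, with $\deg \zeta_i = 1$, $\deg \xi_i = e$, $\deg a_i = d-i$, $\deg b_i = d-e-i$, and shows (Lemma~\ref{grade}) that the relations among these are homogeneous. The superattracting cycle at $\infty$ is what forces $\deg g = d - e < d = \deg f$, and this degree asymmetry makes the top-degree term of $f_{n-1}(\xi_i)g_{m-1}(\xi_i) - f_{m-1}(\xi_i)g_{n-1}(\xi_i)$ collapse to a single monomial $\xi_i^{D}$ (Lemma~\ref{topdegree}). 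So modulo the top-degree ideal, every critical value is nilpotent, hence the image of $S = k[\xi_1, \ldots, \xi_{2d-1-e}]$ in the quotient is a finite $k$-module. Finiteness of the map then follows from Lemma~\ref{crit}: by the theory of the tame fundamental group (Riemann existence), a tamely ramified cover of $\mathbb{P}^1$ is determined by its branch locus up to finitely many choices. This passage to top-degree terms, together with the reduction of a branched cover to its critical value configuration, is the content of the proof and is absent from your proposal. Note also that this mechanism explains the failure cases you mention for the right reasons: for Latt\`es maps no critical point is periodic so one cannot rewrite the equations in terms of $\xi_i$ alone, and in the wildly ramified case $e$ may vanish in $k$ so the degree collapse does not happen.
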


In Section~\ref{finiteness}, we will prove Theorem~\ref{main} by looking at a grand coordinate ring that encodes not only the coefficients $a_{i}$ and $b_{i}$ of
each rational map $\varphi$ satisfying the hypotheses of the theorem, but also the critical points and the critical values. We turn this into a graded ring by
assigning each critical point weight $1$ and assigning other variables weights that make the defining equations for $\varphi$ homogeneous. We then show that instead
of looking at the system of equations $\varphi^{n}(\zeta_{i}) = \varphi^{m}(\zeta_{i})$ for all critical points $\zeta_{i}$ (here, $n$ and $m$ are fixed integers,
and for fixed $n$ and $m$ we have just finitely many critical portraits), we can look only at the top-degree portions of those equations, which are much simpler. A
series of reductions then converts rigidity into a problem of parametrizing rational maps by their critical values, a solved problem using the theory of branched
covers.

What we actually prove is not a statement about geometric loci of PCF maps, but about the corresponding rings. We will actually prove that if we take a carefully
chosen slice $X$ of $\Rat_{d}$ with $\infty$ a fixed critical point such that $X$ maps finite-to-one into $\mathrm{M}_{d}$, then a coordinate ring that is a finite
extension of $\mathcal{O}_{X}$ modulo the PCF ideal $\varphi^{n}(\zeta_{i}) = \varphi^{m}(\zeta_{i})$ yields a ring with only finitely many possibilities for the
critical value set. Thus there are only finitely many tamely ramified morphisms in the ring.

This method naturally fails in the two known exceptions for rigidity: it fails in the Latt\`{e}s case because the Latt\`{e}s maps' critical points are preperiodic
but never periodic, and it fails in wildly ramified cases because multiple parts of the proof require tame ramification for finiteness to work. A full
generalization to tamely ramified non-Latt\`{e}s maps is unlikely. Unless we can produce a critical point that maps to itself under some iterate of $\varphi$, the
equations $\varphi^{n}(\zeta_{i}) = \varphi^{m}(\zeta_{i})$ are already homogeneous with respect to the grading we use, and there is no hope of simplifying them.

We also investigate integrality, a related result, first investigated by Epstein in~\cite{Eps} as a way of proving rigidity for polynomials of prime-power degree.
Instead of proving the finiteness of a module over a field $k$, we can prove the finiteness of a module over a ring, say a local ring $\mathcal{O}_{K}$. This turns
out to be equivalent to the statement that the set of critical values of a PCF map is integral. In Section~\ref{integrality}, we prove,

\begin{cor}\label{integralmain}Let $\varphi(z)$ be a PCF map with a superattracting cycle defined over a local field, such that any integral model of $\varphi(z)$
has tamely ramified reduction. Then the configuration of the critical values of $\varphi(z)$ is integral.\end{cor}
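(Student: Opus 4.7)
The plan is to run the module–theoretic argument of Theorem~\ref{main} with the base changed from a field $k$ to the ring of integers $\mathcal{O}_K$. In Section~\ref{finiteness} one constructs a finite extension of $\mathcal{O}_X$, quotients by the PCF ideal $\varphi^{n}(\zeta_i)=\varphi^{m}(\zeta_i)$, and shows that the resulting ring admits only finitely many configurations of critical values. If we carry out exactly the same construction but take $X$ to be an $\mathcal{O}_K$-slice of $\Rat_d$ pinning the given superattracting periodic critical point at $\infty$, then the quotient ring is a finitely generated $\mathcal{O}_K$-module; every element of it is then integral over $\mathcal{O}_K$, and in particular the coordinates of the critical values of $\varphi$ are. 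This is precisely the statement of integrality of the critical value configuration.

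First I would, after a $\PGL(2,\mathcal{O}_K)$-change of coordinates, put $\varphi$ in a normal form with a periodic critical point at $\infty$; the hypothesis that \emph{any} integral model has tamely ramified reduction is what lets us do this without losing tameness on the special fibre, so that the proof of Theorem~\ref{main} can still be invoked on each geometric fibre. Next I would form, over $\Spec \mathcal{O}_K$, the grand coordinate ring encoding the coefficients $a_i, b_i$, the critical points $\zeta_i$, and the critical values $v_i$, equipped with the same grading that makes the defining equations homogeneous and weights critical points by $1$. The PCF relations, together with their homogeneous top-degree parts, then generate the same ideal as in Section~\ref{finiteness}, and the chain of reductions culminating in parametrizing the rational map by its critical values goes through verbatim, yielding a quotient that is finite over $\mathcal{O}_K$.

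Finiteness as an $\mathcal{O}_K$-module immediately forces the coordinates of the critical values to satisfy monic polynomial relations over $\mathcal{O}_K$, so the critical-value configuration of $\varphi$ lies in $\mathbb{P}^1(\overline{\mathcal{O}_K})$, which is the desired integrality.

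The main obstacle is ensuring that the fibrewise finiteness of Theorem~\ref{main} spreads to a genuine $\mathcal{O}_K$-module finiteness. Over a field each step — in particular the Riemann--Hurwitz count and the use of parametrization of branched covers by critical value data — has a clean dimensional interpretation, but over $\mathcal{O}_K$ one must check that no component of the relevant scheme has positive-dimensional vertical fibres concentrated over the closed point of $\Spec \mathcal{O}_K$. This is exactly where the hypothesis that \emph{every} integral model of $\varphi$ has tamely ramified reduction is used: it rules out the wildly ramified degenerations on the special fibre that would produce positive-dimensional components analogous to the Latt\`es curves, so that Nakayama's lemma upgrades fibrewise finiteness to finiteness over $\mathcal{O}_K$, and integrality follows.
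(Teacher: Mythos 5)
Your high-level strategy --- transport the module-finiteness argument of Theorem~\ref{main} from $k$ to $\mathcal{O}_K$ (or $\mathbb{Z}$), then deduce integrality of the critical values from module-finiteness --- is indeed the paper's approach (via Corollaries~\ref{integral1} and~\ref{integral2}). But your explanation of \emph{why} the base-change goes through is wrong, and the mechanism you propose would not close the gap. You frame the key difficulty as ``spreading fibrewise finiteness to $\mathcal{O}_K$-module finiteness'' and propose invoking Nakayama once tame ramification rules out positive-dimensional vertical fibres. That step does not work: quasi-finiteness over $\mathcal{O}_K$ does not imply finiteness (e.g. $\mathbb{Z}[x]/(px-1) \cong \mathbb{Z}[1/p]$ has finite fibres over $\Spec\mathbb{Z}$ but is not a finite $\mathbb{Z}$-module), and Nakayama over $\mathcal{O}_K$ only trims a generating set of an \emph{already} finitely generated module --- it cannot manufacture finite generation from fibre data. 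You would need properness or an explicit positive grading argument, and you supply neither.

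The paper's actual observation is simpler and sidesteps this entirely. The proof of Theorem~\ref{isovalue} is a fraction-free computation: nowhere does it invert elements of $k$. The top-degree part of each PCF relation $\varphi^m(\zeta_i) = \varphi^n(\zeta_i)$ is shown to be $\xi_i^D$ with $D = d^{n-1} + d^{m-1} - e^{m-1}$, so the image ring $M'$ is a quotient of $k[\xi_1,\ldots,\xi_{2d-e-1}]/(\xi_1^D,\ldots,\xi_{2d-e-1}^D)$. Running the identical computation over $\mathbb{Z}$ produces a quotient of $\mathbb{Z}[\xi_1,\ldots,\xi_{2d-e-1}]/(\xi_1^D,\ldots,\xi_{2d-e-1}^D)$, which is manifestly free of finite rank over $\mathbb{Z}$ --- no fibrewise argument, no Nakayama. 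One then tensors with $\mathcal{O}_K$ and notes that the critical values of $\varphi$ land in the finite $\mathcal{O}_K$-module $M' \otimes \mathcal{O}_K$, hence are integral. The role of the absolute tame ramification hypothesis is also slightly different from what you describe: it ensures the reduction of $\varphi$ mod the maximal ideal (even after any $\PGL(2,\mathcal{O}_K)$-conjugation) still has $2d-2$ critical points and values to map to, and in particular that $e$ and the other ramification indices do not vanish mod $p$ so that the critical-point polynomial $e\prod(z-\zeta_i)$ and the grading remain nondegenerate on the special fibre.
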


Only this definition of integrality is true. A PCF map satisfying the hypotheses of Corollary~\ref{integralmain} does not have to be integral itself, i.e. have good
reduction; only its critical value set must be integral. In personal communication Rivera-Letelier provided a PCF map satisfying the hypothesis of
Corollary~\ref{integralmain} that nonetheless has bad reduction (Example~\ref{jrl}).

A separate approach to proving rigidity is to use McMullen's theorem. McMullen's proof is also transcendental, but it appears slightly more amenable to attacking in
positive characteristic using non-archimedean analysis. We also have,

\begin{thm}\label{auto}~\cite{Lev3} Let $K$ be a function field of characteristic $p$. If $\varphi \in \mathrm{M}_{d}(K)$ is PCF and $p > d$ or $\varphi$ is the
composition of maps of degree less than $p$ then its multipliers lie in the field of constants; in other words, McMullen implies rigidity.\end{thm}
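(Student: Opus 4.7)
The plan is to show that any PCF map $\varphi \in \mathrm{M}_{d}(K)$ satisfying the tame hypothesis has all multipliers in the constant field, by splitting into cases based on whether the critical portrait of $\varphi$ contains a periodic vertex. Write $K = k(C)$ with $k$ its field of constants (which we may assume is algebraically closed in $K$).

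Suppose first that $\varphi$ has a superattracting cycle. Then I would apply Theorem~\ref{main} directly: the subscheme $V \subset \mathrm{M}_{d}$ cut out by the PCF equations $\varphi^{n_{i}}(\zeta_{i}) = \varphi^{m_{i}}(\zeta_{i})$ for the portrait of $\varphi$ is defined over $\mathbb{F}_{p}$ and, under tame ramification plus the existence of a periodic critical point, is zero-dimensional. Any $K$-point of a zero-dimensional $\mathbb{F}_{p}$-scheme factors through a finite algebraic extension of $\mathbb{F}_{p}$ sitting inside $K$, and this subfield must lie in $k$. Hence $\varphi$ has a model in $\mathrm{M}_{d}(k)$, and its conjugation-invariant multipliers belong to $k$. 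If instead every critical point of $\varphi$ is strictly preperiodic, I would adapt the grand-coordinate-ring approach of Section~\ref{finiteness} to prove that $\varphi$ must be Latt\`{e}s — the top-degree portion of the PCF ideal, which in the superattracting case carves out a zero-dimensional locus, should in this regime carve out precisely the Latt\`{e}s curve. Once $\varphi = \varphi_{E,\alpha_{m,T}}$ is known to be Latt\`{e}s, a direct computation on the elliptic cover shows every periodic multiplier equals $\pm m^{n}$ for some $n$, which already lies in the prime field $\mathbb{F}_{p} \subset k$.

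The corollary that McMullen implies rigidity then follows at once: with all multipliers of $\varphi$ in $k$, the composition $C \to \mathrm{M}_{d} \xrightarrow{\Lambda_{1} \times \ldots \times \Lambda_{n}} \mathbb{A}^{k_{1} + \ldots + k_{n}}$ lands in a single point of $\mathbb{A}^{k_{1}+\ldots+k_{n}}(k)$, and if the characteristic-$p$ analog of McMullen's theorem is in hand then that preimage is finite and the image of $C$ in $\mathrm{M}_{d}$ reduces to a single point. The main obstacle is the strictly preperiodic case: the proof of Theorem~\ref{main} was carefully arranged to exploit a periodic critical point — indeed the introduction itself remarks that without such a point the equations $\varphi^{n}(\zeta_{i}) = \varphi^{m}(\zeta_{i})$ are already homogeneous of the chosen grading, so the key simplification step in the argument is unavailable. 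Extracting the Latt\`{e}s classification in this regime therefore requires genuinely new input rather than a routine adaptation of the grand coordinate ring machinery.
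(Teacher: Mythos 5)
This theorem is stated with a citation to \cite{Lev3}; the paper you are working from contains no proof of it, so there is nothing in the source to compare against directly. What can be assessed is whether your proposed argument would constitute a correct proof, and there are significant problems.

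The most serious is the logic of your first case. You propose to prove the theorem (in the superattracting case) by invoking Theorem~\ref{main}, i.e.\ by invoking rigidity itself. But the entire point of Theorem~\ref{auto}, as the paper explains in the paragraph preceding it, is to provide a route to rigidity that goes \emph{through} McMullen's theorem, as an alternative strategy that might work when the direct algebraic proof is unavailable. Deducing ``multipliers lie in the constant field'' from rigidity is trivial and defeats the purpose: if rigidity is already in hand, you never need McMullen. The intended content of Theorem~\ref{auto} is precisely that one can control the multipliers \emph{without} first knowing rigidity. The argument in \cite{Lev3} is not the grand-coordinate-ring argument of this paper; the introduction even hints at the mechanism (``non-archimedean analysis''), suggesting a local, place-by-place analysis of valuations of multipliers rather than a global moduli-space dimension count.

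Your second case is both incomplete and resting on a false premise. You assert that if every critical point of $\varphi$ is strictly preperiodic then ``$\varphi$ must be Latt\`es.'' That is not true: there are abundant non-Latt\`es PCF maps whose critical portraits contain no periodic critical point (already for quadratic rational maps). What distinguishes Latt\`es maps is that they lie in a positive-dimensional PCF family, not that their critical points are strictly preperiodic. Conflating these makes the proposed top-degree analysis aim at the wrong target. You acknowledge this case needs ``genuinely new input,'' which is candid, but it means the proposal does not prove the theorem in the very regime Theorem~\ref{auto} is designed to cover beyond Theorem~\ref{main}. Finally, even the Latt\`es multiplier computation as you state it ($\pm m^n$) implicitly assumes $\alpha = [m]$, which does not cover isogeny or CM Latt\`es maps, and is a side issue relative to the structural gap above.

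In short: case~1 is circular relative to the paper's stated purpose, case~2 is built on a misconception about strictly preperiodic PCF maps, and neither case engages with the actual mechanism (local analysis of multiplier valuations over the function field) that a proof of this statement must supply.
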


Since in~\cite{Sil96} it is shown that $\Lambda_{1}$ is an isomorphism over $\mathbb{Z}$ between $\mathrm{M}_{2}$ and a plane in $\mathbb{A}^{3}$, we immediately
obtain,

\begin{cor}\label{sil}Thurston's rigidity is true in $\mathrm{M}_{2}$ in any characteristic except $2$.\end{cor}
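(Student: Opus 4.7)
The plan is to combine the two results stated just before the corollary. Characteristic $0$ is handled by the classical Thurston theorem, so we may assume the characteristic $p$ is positive and, since $p \neq 2$, in fact $p \geq 3$. Two easy observations specific to degree two remove the only possible exceptions: every degree-$2$ rational map is tamely ramified in characteristic $\neq 2$ (ramification indices are at most $2$, and $p \nmid 2$), and there are no Latt\`{e}s maps of degree $2$, since a Latt\`{e}s map $\varphi_{E,\alpha_{m,T}}$ has degree $m^{2}$ and $2$ is not a square. Thus neither of the two exceptions to rigidity discussed in the introduction can intervene.

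Now I would argue by contradiction. Suppose some critical portrait admits infinitely many maps in $\mathrm{M}_{2}(\overline{k})$, where $k$ is an algebraically closed field of characteristic $p \geq 3$. Then the corresponding PCF locus in $\mathrm{M}_{2}$ contains a positive-dimensional component. After passing to a smooth curve in this component and letting $K/k$ be its function field, we obtain a PCF map $\varphi \in \mathrm{M}_{2}(K)$ whose class is genuinely not in $\mathrm{M}_{2}(k)$.

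Since $d = 2 < p$, Theorem~\ref{auto} applies, and every multiplier of $\varphi$ — in particular the period-$1$ multipliers — lies in the field of constants $k$. Hence $\Lambda_{1}(\varphi) \in \mathbb{A}^{3}(k)$. I would then invoke Silverman's theorem, cited just before the corollary: $\Lambda_{1}$ is an isomorphism over $\mathbb{Z}$ from $\mathrm{M}_{2}$ onto a plane in $\mathbb{A}^{3}$. Pulling back the $k$-rational point $\Lambda_{1}(\varphi)$ through this isomorphism forces the class of $\varphi$ to lie in $\mathrm{M}_{2}(k)$, contradicting the choice of $\varphi$.

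There is no substantive obstacle; the corollary is a direct bookkeeping consequence of Theorem~\ref{auto} and Silverman's isomorphism. The only real content is the pair of degree-$2$-specific observations above, which also explain why $p \neq 2$ is precisely the hypothesis needed: for $p = 2$ every degree-$2$ separable map is wildly ramified at its unique critical point, violating the tameness assumption that enters Theorem~\ref{auto} through the PCF finiteness results of the paper.
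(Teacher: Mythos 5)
Your proof is correct and takes essentially the same route as the paper, which derives the corollary immediately from Theorem~\ref{auto} (via $p > d = 2$) combined with Silverman's result that $\Lambda_{1}$ is an isomorphism over $\mathbb{Z}$; you have simply made explicit the ``pass to a function field, push multipliers into constants, pull back through $\Lambda_{1}$'' argument that the paper leaves implicit. The degree-$2$-specific observations (tameness when $p \neq 2$, absence of Latt\`es curves in $\mathrm{M}_{2}$) are exactly the right bookkeeping.
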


Theorem~\ref{auto} is true slightly more generally: if $p \leq d$, it holds as long as $\varphi$ satisfies the same condition on tamely ramified reductions as in
Corollary~\ref{integralmain}. Unfortunately, even this approach is problematic as a means of proving rigidity, since it seems easier to prove rigidity directly than
to prove it via McMullen. The results of Theorem~\ref{main} and Corollary~\ref{sil} remain the strongest positive-characteristic rigidity results known to the
author.

\section{Finiteness}\label{finiteness}

We write $\varphi$ for a rational function of degree $d$ with a multiplicity-$e$ pole at $\infty$, where $e > 1$. We write $\varphi$ as $f/g$, with $f = \sum
a_{i}z^{i}$ and $g = \sum b_{i}z^{i}$. We assume $0$ is a fixed point, i.e. $a_{0} = 0$; we lose nothing by making this assumption since $\varphi$ has more than one
fixed point (the assumption that $e > 1$ means the multiplier at $\infty$ is $0$, whereas a unique fixed point has multiplier $1$). We may make one final
conjugation, which we use to assume $a_{d} = b_{d-e}$; we normalize to assume $a_{d} = b_{d-e} = 1$. We also assume that the characteristic does not divide any of
the ramification degrees. With the above assumptions that $a_{0} = 0, a_{d} = b_{d-e} = 1$, we call the resulting space of maps with a degree-$e$ pole at infinity
$X \subseteq \Rat_{d}$. Note that $X$ maps finite-to-one into $\mathrm{M}_{d}$.

\begin{lem}\label{crit}In $X$, the configuration of critical values of $\varphi$ is enough to specify each tamely ramified $\varphi$ up to finitely many
choices.\end{lem}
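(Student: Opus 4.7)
The plan is to reduce Lemma~\ref{crit} to the classical finiteness of tamely ramified branched covers of $\mathbb{P}^{1}$ with prescribed branch data.

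First, I would enumerate the admissible ramification types. By the tame Riemann--Hurwitz formula, the ramification indices $e_{\zeta}$ at the critical points $\zeta$ of $\varphi$ satisfy $\sum_{\zeta}(e_{\zeta} - 1) = 2d-2$, with each $e_{\zeta}$ at most $d$ and coprime to the characteristic. Thus only finitely many combinatorial ramification types can arise for a fixed $d$.

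Next, fix one such ramification type and one configuration of critical values $\{v_{1}, \ldots, v_{k}\}$, together with a cycle partition of $d$ above each $v_{i}$ compatible with that type. By the Riemann existence theorem and its tame analogue in positive characteristic, the number of isomorphism classes of degree-$d$ tamely ramified covers $\mathbb{P}^{1} \to \mathbb{P}^{1}$ realizing this branch data is finite. The key input is that the tame fundamental group of $\mathbb{P}^{1}_{\bar{k}} \setminus \{v_{1}, \ldots, v_{k}\}$ is topologically generated by inertia loops $\sigma_{1}, \ldots, \sigma_{k}$ around the branch points subject to $\sigma_{1} \cdots \sigma_{k} = 1$, just as over $\mathbb{C}$; such covers correspond to conjugacy classes of transitive homomorphisms into the symmetric group on $d$ letters sending each $\sigma_{i}$ to a permutation of the prescribed cycle type, and the number of such tuples is visibly finite. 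Summing over the finitely many cycle-partition assignments finishes this step.

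Finally, I would pass from isomorphism classes of covers to actual points of $X$. A point of $X$ is a specific map $\varphi$ with $\infty$ a pole of order $e$, $0$ a fixed point, and $a_{d} = b_{d-e} = 1$. By construction of $X$, these three normalizations cut the $\PGL(2)$-action on the source down to a finite stabilizer, so each isomorphism class of covers contributes at most finitely many points of $X$.

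The main obstacle is securing the tame Riemann existence input in positive characteristic; over $\mathbb{C}$ it is completely classical, but in characteristic $p$ one must invoke the structure theorem for the tame fundamental group of an open curve, which controls only the prime-to-$p$ part of the monodromy. Once that input is in hand, the rest --- enumerating ramification types and quotienting by the normalizations defining $X$ --- is routine bookkeeping.
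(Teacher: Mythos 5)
Your proof follows the same route as the paper: finiteness via the structure of the tame fundamental group of a punctured $\mathbb{P}^1$ (Riemann existence and its prime-to-$p$ tame analogue), followed by observing that the normalizations defining $X$ leave only finitely many representatives of each cover. The one place you are more schematic than the paper is the final step: ``these three normalizations cut the $\PGL(2)$-action on the source down to a finite stabilizer'' is not quite the right statement, since the set $\{A : \varphi \circ A \in X\}$ for a fixed $\varphi$ is not a stabilizer subgroup. What actually needs to be checked --- and what the paper spells out --- is that this set is finite, by decomposing the right-$\PGL(2)$ freedom into (i) finitely many choices of which order-$e$ pole of $\varphi$ to send $\infty$ to, (ii) finitely many choices of which preimage of $0$ to send $0$ to, and (iii) a remaining diagonal torus $z \mapsto tz$, which the normalization $a_d = b_{d-e} = 1$ forces to satisfy $t^e = 1$, again finitely many choices. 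Your Riemann--Hurwitz enumeration of ramification types is a perfectly good bookkeeping device but is not strictly needed, since the degree $d$ is fixed and the branch locus is prescribed; the paper gets the finiteness directly from the finiteness of continuous actions of a topologically finitely generated profinite group on a set of bounded size.
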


\begin{proof}This follows from standard results on monodromy theory. See~\cite{Sza} for more details. In brief, the tame absolute Galois group of $\mathbb{P}^{1}$
minus $n$ points, $\pi_{1}^{p'}$, is the profinite prime-to-$p$ completion of the free group on $n-1$ generators (Theorem 4.9.1), and finite unramified covers
correspond to finite sets with a continuous left-action of the absolute Galois group (Theorem 4.6.4); for tame ramification, we also require the inertia group at
each generator to be prime to $p$. Now specifying the list of critical values with multiplicities bounds the degree of the cover, bounding the size of those finite
sets, and there are finitely many continuous actions of a topologically finitely generated group on a finite set.

Covers of $\mathbb{P}^{1}$ are only unique up to the right-action of $\PGL(2)$. While this is not the same as the conjugation action we use to obtain
$\mathrm{M}_{d}$, $X$ maps finite-to-one into the quotient by the right-action as well. This is because we may pick $\infty$ to be an order-$e$ pole and $0$ to be a
zero, involving just finitely many choices, and then up to a finite quotient the only $\PGL(2)$ right-action that preserves this choice is the diagonal action. But
now the diagonal action of multiplication by $t$ acts on $a_{d}/b_{d-e}$ as multiplication by $t^{-e}$, and so we again get just finitely many choices that preserve
$X$.\end{proof}

On the affine variety $X$ we would like to show that the equations defining a PCF locus intersect at finitely many points. This is equivalent to showing that the
associated coordinate ring is finite as a $k$-module. More precisely, let $\zeta_{i}$ for $i = 1, \ldots, 2d-1-e$ be the critical points and $\xi_{i}$ be the
associated critical values.

Now let $$R = k[\zeta_{1}, \ldots, \zeta_{2d-1-e}, \xi_{1}, \ldots, \xi_{2d-1-e}, a_{d-1}, \ldots, a_{1}, b_{d-e-1}, \ldots, b_{0}]$$ We regard it as a graded ring,
with $$\deg \zeta_{i} = 1, \deg \xi_{i} = e, \deg a_{i} = d-i, \deg b_{i} = d-e-i$$ This requires us to mark the critical points, but we lose nothing from doing so,
since there are only finitely many of them, which does not change any finiteness question. We impose the usual relations of coordinates, critical points, and
critical values; we call the ideal they generate $I_{Y}$ and the quotient ring $\mathcal{O}_{Y}$. The variety $Y = \Spec\mathcal{O}_{Y}$ is an $S_{2d-1-e}$-cover of
a variety that contains $X$ as an open dense subvariety but also degenerate maps for which $f$ and $g$ have a common root.

\begin{lem}\label{grade}The ring $\mathcal{O}_{Y}$ is also graded with the same grading as $R$.\end{lem}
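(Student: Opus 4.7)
The plan is to exhibit a generating set for $I_Y$ consisting of homogeneous elements of $R$; once that is done, $I_Y$ is a homogeneous ideal by definition, and the grading on $R$ descends to the quotient $\mathcal{O}_Y = R/I_Y$. The natural generators come in two families, one for each $i = 1, \ldots, 2d-1-e$. The first family records that $\zeta_i$ maps to $\xi_i$ under $\varphi$, namely
\begin{equation*}
f(\zeta_i) - \xi_i g(\zeta_i) = 0,
\end{equation*}
and the second records that $\zeta_i$ is critical, namely
\begin{equation*}
f'(\zeta_i) g(\zeta_i) - f(\zeta_i) g'(\zeta_i) = 0.
\end{equation*}
To ensure that the ordered tuple $(\zeta_1, \ldots, \zeta_{2d-1-e})$ exhausts the critical divisor (the geometric content of the $S_{2d-1-e}$-cover statement), one may also impose the polynomial identity $f'(z)g(z) - f(z)g'(z) = e \prod_i(z-\zeta_i)$, which expands into further relations by equating coefficients in $z$. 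These last are homogeneous by the same bookkeeping as below.

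Next I would verify homogeneity term by term. Using $a_d = 1$ and $a_0 = 0$, the sum $f(\zeta_i) = \sum_j a_j \zeta_i^j$ has each summand of weight $\deg a_j + j\deg \zeta_i = (d-j) + j = d$, and likewise $g(\zeta_i) = \sum_j b_j \zeta_i^j$ has each summand of weight $(d-e-j) + j = d-e$. Since $\deg \xi_i = e$, the first relation is homogeneous of total weight $d$. Differentiation with respect to $z$ lowers the $\zeta_i$-exponent by one, so $f'(\zeta_i)$ has weight $d-1$ and $g'(\zeta_i)$ has weight $d-e-1$; both products $f'(\zeta_i)g(\zeta_i)$ and $f(\zeta_i)g'(\zeta_i)$ are then homogeneous of weight $2d-1-e$, and so is their difference. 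The Wronskian factorization relation, if included, is also homogeneous of weight $2d-1-e$ after assigning $z$ weight $1$ so as to match $\zeta_i$.

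Since $I_Y$ admits a generating set of homogeneous elements, it is a homogeneous ideal and the grading descends. The main — and essentially the only — obstacle is just to confirm that no inhomogeneous normalization has been tacitly absorbed into $I_Y$; but the weights $\deg \zeta_i = 1$, $\deg \xi_i = e$, $\deg a_j = d-j$, $\deg b_j = d-e-j$ were chosen precisely so that, under the normalizations $a_d = b_{d-e} = 1$ and $a_0 = 0$ that cut out $X$, every monomial in the defining equations carries the same total weight as the leading normalized term. Once that is noted, the verification reduces to the mechanical degree count above.
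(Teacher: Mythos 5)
Your proof is correct and follows essentially the same approach as the paper: exhibit homogeneous generators for $I_Y$ (the critical-point polynomial factorization expanded coefficient-by-coefficient, together with the relations $f(\zeta_i) = \xi_i g(\zeta_i)$) and verify the degree count term by term. The only cosmetic differences are that you include the redundant relations $f'(\zeta_i)g(\zeta_i) - f(\zeta_i)g'(\zeta_i) = 0$, which already follow from substituting $z = \zeta_i$ into the Wronskian factorization, and that the paper spells out the weight $2d-e-k-1$ of each individual $z^k$-coefficient equation rather than treating $z$ as a formal weight-one marker.
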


\begin{proof}It suffices to show that $I_{Y}$ is generated by homogeneous polynomials. Now the coordinate-critical point relations are encoded by the critical point
polynomial $$\sum_{(i, j) = (1, 0)}^{(d, d-e)}(i-j)a_{i}b_{j}z^{i+j-1} = e\prod_{i = 1}^{2d-1-e}(z - \zeta_{i})$$ which imposes $2d-1-e$ equations, one for each
coefficient except the leading term. Now on the left, the $z^{k}$-coefficient is the sum of $a_{i}b_{j}$'s for which $i+j = k+1$ and thus its degree is $2d-e-k-1$,
while on the right it is $$\sigma_{2d-e-k-1} = \sum_{l_{1} < \ldots < l_{2d-e-k-1}}\zeta_{l_{1}}\ldots\zeta_{l_{2d-e-k-1}}$$ which again has degree $2d-e-k-1$.
Since we assume tame ramification throughout, $e \neq 0$ and the right-hand side is not zero. We're now equating homogeneous algebraic expressions of the same
degree, so we obtain homogeneous equations.

The critical point-critical value relations are of the form $\varphi(\zeta_{i}) = \xi_{i}$. By symmetry, it suffices to show that the relation is homogeneous for
$\zeta_{1}$ and $\xi_{1}$. Clearing denominators, we obtain $f(\zeta_{1}) = \xi_{1}g(\zeta_{1})$. Since $\deg\xi_{1} = e$, it suffices to show $f(\zeta_{1})$ is
homogeneous of degree $d$ and $g(\zeta_{1})$ is homogeneous of degree $d-e$. But now each $a_{i}\zeta_{1}^{i}$ term has degree $d$ and each $b_{j}\zeta_{1}^{j}$
term has degree $d-e$, and we are done.\end{proof}

Lemma~\ref{crit} simply says that if $S$ is the $k$-algebra generated by the $\xi_{i}$s alone, with a natural inclusion $S \to R$, then there is a finite module
basis for $\mathcal{O}_{Y}$ that generates all well-defined tamely ramified morphisms in $\mathcal{O}_{Y}$ over $S/I_{Y}\cap S$. This is false if we drop either of
the conditions: wildly ramified maps have fewer critical points so there are positive-dimension families with constant critical values, for example $z^{p} + tz$ is
a non-isotrivial family whose only critical point and value is $\infty$; in addition, if we allow the maps to degenerate, then each common root of $f$ and $g$ is a
critical point and any point in $\mathbb{P}^{1}$ can be set as its corresponding critical value without violating the equations defining $Y$.

We will also use,

\begin{defn}Let $h \in R$ or $h \in \mathcal{O}_{Y}$. If $\deg h = D$, we say the \textbf{top-degree term} of $h$ is its degree-$D$ part. In particular, it is not
necessarily a monomial.\end{defn}

For a fixed pair of integers $n > m$, we need to show that there are finitely many tamely ramified maps in $X$ for which $\varphi^{m}(\zeta_{i}) =
\varphi^{n}(\zeta_{i})$ for all $i$. Thus Theorem~\ref{main} is equivalent to showing finiteness on the level of rings and modules:

\begin{thm}\label{isovalue}Let $M = \mathcal{O}_{Y}/I_{m, n}$, where the ideal $I_{m, n}$ is generated by $\varphi^{m}(\zeta_{i}) = \varphi^{n}(\zeta_{i})$ for all
$i$. Then the image ring $M'$ of the map from $S$ to $M$ via $R$ and $\mathcal{O}_{Y}$ is a finite $k$-module.\end{thm}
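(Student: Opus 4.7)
The plan is to exploit the grading on $\mathcal{O}_Y$ from Lemma~\ref{grade}: by passing to the associated graded of $M$, the PCF relations collapse to pure monomials, and Lemma~\ref{crit} becomes directly applicable. Concretely, $\mathrm{gr}(M) = \mathcal{O}_Y / \mathrm{in}(I_{m,n})$ is the associated graded of the filtered module $M$, where $\mathrm{in}(I_{m,n})$ is the initial ideal. The image $M' \subseteq M$ inherits the filtration with $\mathrm{gr}(M') \hookrightarrow \mathrm{gr}(M)$ and $\dim_k M' = \dim_k \mathrm{gr}(M')$, so finiteness of $\mathrm{gr}(M)$ as a $k$-module will imply finiteness of $M'$. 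This reduces the theorem to a question about a graded quotient.

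The central computation is the top-degree part of each generator $P_n(\zeta_i)Q_m(\zeta_i) - P_m(\zeta_i)Q_n(\zeta_i)$ of $I_{m,n}$, where $\varphi^k = P_k/Q_k$ after clearing denominators. Using that $\infty$ is a superattracting fixed point of ramification order $e>1$, I expect an induction on $k$ to give
$$\mathrm{top}(P_k(\zeta_i)) = f(\zeta_i)^{d^{k-1}}, \qquad \mathrm{top}(Q_k(\zeta_i)) = f(\zeta_i)^{d^{k-1}-e^{k-1}} g(\zeta_i)^{e^{k-1}}.$$
Since $n > m$ forces $e^n > e^m$, the top-degree of the PCF generator equals that of $P_nQ_m$ alone, a pure monomial $f(\zeta_i)^A g(\zeta_i)^B$ with $A,B>0$. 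Applying the defining relation $f(\zeta_i) = \xi_i g(\zeta_i)$ then rewrites the relation as $\xi_i^A g(\zeta_i)^{A+B} \in \mathrm{in}(I_{m,n})$, one such monomial relation per critical point.

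The geometric content is that at every point of $\mathrm{Spec}(\mathrm{gr}(M))$ and for each $i$, either $\xi_i = 0$ or $g(\zeta_i) = 0$; informally, the $i$th critical value lies in $\{0,\infty\}$. On the morphism locus, where $\gcd(f,g) = 1$, the relation $\xi_i g(\zeta_i) = f(\zeta_i)$ forces $g(\zeta_i)$ to be a unit (otherwise $\zeta_i$ is a common root of $f$ and $g$), so the monomial relation collapses to pure nilpotence $\xi_i^A = 0$ in $\mathrm{gr}(M)$. Lemma~\ref{crit} supplies $\mathcal{O}_Y$ as a finite module over $S$ on the tame morphism locus, and killing the $\xi_i^A$ leaves $S/(\xi_1^A, \ldots, \xi_{2d-1-e}^A)$, a finite-dimensional $k$-algebra. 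This makes $\mathrm{gr}(M)$ finite as a $k$-module, whence $M'$ is finite.

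The main obstacle I anticipate is reconciling the local character of the nilpotency conclusion (which requires inverting $g(\zeta_i)$ to pass from $\xi_i^A g(\zeta_i)^{A+B} = 0$ to $\xi_i^A = 0$) with the global statement about $M'$. The non-morphism locus of $\mathrm{Spec}(\mathcal{O}_Y)$, where $f$ and $g$ share a root, lies outside the tame-rational-map interpretation and hence outside the geometric content of the theorem, but the argument must either excise this locus cleanly at the start or promote the inductive identity to a relation in $\mathrm{in}(I_{m,n})$ that does not depend on the unit status of $g(\zeta_i)$; keeping this bookkeeping consistent across the full $\mathrm{Spec}(\mathcal{O}_Y)$ is where I expect most of the technical work to lie.
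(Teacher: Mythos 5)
Your high-level strategy is the same as the paper's: use the grading to pass to initial forms, compute the top-degree term of each PCF generator, and observe that the resulting monomials kill the $\xi_i$'s up to nilpotence. But you carry out the top-degree computation starting from $\zeta_i$, and this introduces a genuine gap that the paper's proof avoids by a small but essential trick.

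The issue is your choice of polynomial representative for the relation $\varphi^m(\zeta_i)=\varphi^n(\zeta_i)$. You clear denominators from $f_m(\zeta_i)/g_m(\zeta_i)=f_n(\zeta_i)/g_n(\zeta_i)$, getting the generator $f_n(\zeta_i)g_m(\zeta_i)-f_m(\zeta_i)g_n(\zeta_i)$. But in $\mathcal{O}_Y$, using $f_k(\zeta_i)=g(\zeta_i)^{d^{k-1}}f_{k-1}(\xi_i)$ and $g_k(\zeta_i)=g(\zeta_i)^{d^{k-1}}g_{k-1}(\xi_i)$ (both by induction from $f(\zeta_i)=\xi_i g(\zeta_i)$), your generator equals $g(\zeta_i)^{d^{n-1}+d^{m-1}}$ times $f_{n-1}(\xi_i)g_{m-1}(\xi_i)-f_{m-1}(\xi_i)g_{n-1}(\xi_i)$. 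The paper instead first rewrites $\varphi^k(\zeta_i)=\varphi^{k-1}(\xi_i)$ and clears denominators in the $\xi_i$-variable, taking exactly the latter element as the generator. This is the crucial move: since $\deg\xi_i=e>1$, Lemma~\ref{topdegree} applies directly to $z=\xi_i$, and the top-degree of the paper's generator is the pure power $\xi_i^{D}$ with $D=d^{n-1}+d^{m-1}-e^{m-1}$, with no $g(\zeta_i)$ factor at all. Your version carries the extra $g(\zeta_i)^{d^{n-1}+d^{m-1}}$ through the entire computation, so you end up with $\xi_i^{D}g(\zeta_i)^{d^{n-1}+d^{m-1}}\in I'_{m,n}$ rather than $\xi_i^{D}\in I'_{m,n}$, and since $g(\zeta_i)\notin S$, this does not directly give any constraint on the image $M'$ of $S$.

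You correctly flag this as the main obstacle, but the localization route you sketch does not close the gap: $\mathcal{O}_Y$ (by design) includes the locus where $f$ and $g$ share a root, precisely so that it is a finitely generated graded ring, and on that locus $g(\zeta_i)$ can vanish. Inverting $g(\zeta_i)$ changes the ring, and finiteness of a localized quotient does not imply finiteness of the unlocalized one. The resolution is exactly the second option you name at the end: promote the relation so that it is independent of whether $g(\zeta_i)$ is a unit. The way to do that is to choose the generator of $I_{m,n}$ as the paper does, starting the iterate from $\xi_i$ rather than $\zeta_i$, which eliminates the $g(\zeta_i)$ factor before it enters the initial-ideal computation. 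With that substitution, the rest of your argument goes through essentially verbatim and coincides with the paper's.

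One minor remark: once the initial ideal is seen to contain $\xi_1^D,\dots,\xi_{2d-1-e}^D$, the conclusion that $M'$ is finite over $k$ follows immediately from $S=k[\xi_1,\dots,\xi_{2d-1-e}]$; you do not need to invoke Lemma~\ref{crit} at this step (the paper uses Lemma~\ref{crit} only to translate between finiteness of $M'$ and finiteness of the PCF locus in $X$, i.e.\ between Theorem~\ref{isovalue} and Theorem~\ref{main}).
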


\begin{proof}It suffices to show that there exists a finite $k$-submodule of $R$, which by abuse of notation we also call $M$, such that each $h \in R$ can be
written as $s + r$ where $s \in I_{m, n}$ and $r \in M$. Since the set $$\{x \in R: \deg x \leq D\}$$ is finite for each integer $D$, this is equivalent to proving
that for some $D$, each $h \in R$ can be written as $s + r$ where $s \in I_{m, n}$ and $\deg r \leq D$.

Moreover, showing that $h$ can be written as $s + r$ with $\deg r \leq D$ is equivalent to showing that whenever $\deg h > D$, we can write $h$ as $s + h_{1}$ with
$\deg h_{1} < \deg h$. In one direction the equivalence is obvious; in the other direction, if $h = s + h_{1}$ and $\deg h_{1} > D$ then we can write $h_{1}$ as
$s_{1}+ h_{2}$ where $s \in I_{m, n}$ and $\deg h_{2} < \deg h_{1}$, and after finitely many steps we will obtain $h_{i}$ such that $\deg h_{i} \leq D$.

The equation $h = s + h_{1}$ can also be viewed in terms of top-degree terms. By definition, $\deg h_{1} < \deg h$, and so $h_{1}$ has no contribution to the
top-degree term of the equation. Any lower-degree term in $h$ or $s$ can also be added back to $h_{1}$, and therefore if the equation is satisfied for top-degree
terms only, it can be satisfied in general. Put another way, it suffices to show that given a homogeneous $h$ of degree more than $D$, we can find $s \in I_{m, n}$
such that the top-degree term of $s$ is exactly $h$.

We replace $I_{m, n}$ with $I'_{m, n}$, whose generators are the top-degree terms of the generators of $I_{m, n}$. It suffices to show that $h$ lies in this ideal:
to see why, write the generators of $I_{m, n}$ as $s_{1}, \ldots, s_{2d-1-e}$, and write their top-degree terms as $s'_{1}, \ldots, s'_{2d-1-e}$, so that $I'_{m, n}
= (s'_{1}, \ldots, s'_{2d-1-e})$. If $h \in I'_{m, n}$, then we can find homogeneous elements in $\mathcal{O}_{Y}$, $t_{1}, \ldots, t_{2d-1-e}$, such that $h = \sum
s'_{i}t_{i}$, and then $\sum s_{i}t_{i}$ has top-degree term equal to $h$.

Observe now that we can work backward: showing that $h \in I'_{m, n}$ whenever $h$ is homogeneous of degree more than $D$ is equivalent to showing that
$\mathcal{O}_{Y}/I'_{m, n}$ is a finite module. This is because, since $I'_{m, n}$ and $h$ are both homogeneous, the $h = s + r$ condition reduces to $h = s$.
Likewise, we can work backward and use the fact that the map $Y \to X$ is finite to show only that there are finitely many tamely ramified morphisms $\varphi \in Y$
satisfying the equations of $I'_{m, n}$.

Let us now compute the top-degree term of the equation $\varphi^{m}(\zeta_{i}) = \varphi^{n}(\zeta_{i})$. Write $\varphi^{n}$ as $f_{n}/g_{n}$, such that $$f_{n}(z)
= \sum a_{i}(f_{n-1}(z))^{i}(g_{n-1}(z))^{d-i} \quad \mbox{and} \quad g_{n}(z) = \sum b_{i}(f_{n-1}(z))^{i}(g_{n-1}(z))^{d-i}$$ By convention, $f_{1} = f$ and
$g_{1} = g$, or alternatively $f_{0}(z) = z$ and $g_{0}(z) = 1$. Then we have:

\begin{lem}\label{topdegree}For each expression $z \in R$ such that $\deg z > 1$, we have $\deg f_{n}(z) = d^{n}\deg z$ and $\deg g_{n}(z) = (d^{n} - e^{n})\deg z$.
If the top-degree term of $z$ is equal to $z'$, then the top-degree term of $f_{n}(z)$ is $z'^{d^{n}}$ and the top-degree term of $g_{n}(z)$ is
$z'^{d^{n}-e^{n}}$.\end{lem}

\begin{proof}We use induction on $n$. If $n = 1$, then $f_{n} = f$ and $g_{n} = g$. In the sum $$\sum_{i = 1}^{d} a_{i}z^{i}$$ the degree of the $i$th term is $d-i
+ i\deg z$ and since $\deg z > 1$, this degree is strictly maximized when $i = d$ and the degree is equal to $d\deg z$. Likewise, in the sum $$\sum_{i = 0}^{d-e}
b_{i}z^{i}$$ the degree of the $i$th term is $d-e-i + i\deg z$ and this is maximized when $i = d-e$ and the degree is equal to $(d-e)\deg z$. In both cases, letting
$z'$ be the top-degree term of $z$, the top-degree term of $f(z)$ is the top-degree term of $z^{d}$, which is just $z'^{d}$, and likewise the top-degree term of
$g(z)$ is $z'^{d-e}$.

Now, suppose the lemma is true up to $n-1$. Then in the sum $$\sum_{i = 1}^{d} a_{i}(f_{n-1}(z))^{i}(g_{n-1}(z))^{d-i}$$ the $i$th term has degree

\begin{align*}
d-i + i\deg f_{n-1}(z) + (d-i)\deg g_{n-1}(z) &= d-i + id^{n-1}\deg z + (d-i)(d^{n-1} - e^{n-1})\deg z\\
&= d + d(d^{n-1} - e^{n-1})\deg z + i(e^{n-1}\deg z - 1)
\end{align*}

This is (strictly) maximized when $i$ is maximized, i.e. when $i = d$ and the degree is $d^{n}\deg z$. Thus the top-degree term comes only from $(f_{n-1}(z))^{d}$,
and by the induction hypothesis it is equal to $z'^{(d^{n-1})d}$.

In the denominator, in the sum $$\sum_{i = 0}^{d-e} b_{i}(f_{n-1}(z))^{i}(g_{n-1}(z))^{d-i}$$ the degree of the $i$th term is $d-e-i + i\deg f_{n-1}(z) + (d-i)\deg
g_{n-1}(z)$ which is also maximized when $i$ is maximized, i.e. when $i = d-e$, and then the degree is $(d^{n} - e^{n})\deg z$. As the top-degree term comes only
from $(f_{n-1}(z))^{d-e}(g_{n-1}(z))^{e}$, by the induction hypothesis it is equal to $z'^{d^{n-1}(d-e) + (d^{n-1} - e^{n-1})e}$, which simplifies to $z'^{d^{n} -
e^{n}}$ as required.\end{proof}

We can apply Lemma~\ref{topdegree} directly to $\xi_{i}$ and rewrite $\varphi^{m}(\zeta_{i}) = \varphi^{n}(\zeta_{i})$ as $\varphi^{m-1}(\xi_{i}) =
\varphi^{n-1}(\xi_{i})$. Alternatively, when $z = \zeta_{i}$, $f(z)$ is homogeneous of degree $d$ and $g(z)$ is homogeneous of degree $d-e$ and neither identically
vanishes because $\xi_{i}$ is not identically $0$ or $\infty$; we obtain the same results using slightly different computation. Using $\xi_{i}$ rather than
$\zeta_{i}$, we clear denominators to obtain $$f_{m-1}(\xi_{i})g_{n-1}(\xi_{i}) = f_{n-1}(\xi_{i})g_{m-1}(\xi_{i})$$

The left-hand side has degree $e(d^{m-1} + d^{n-1} - e^{n-1})$ and the right-hand side has degree $e(d^{m-1} + d^{n-1} - e^{m-1})$. Since $n > m$, the top-degree
term comes entirely from the right-hand side, and is equal to $\xi_{i}^{d^{n-1} + d^{m-1} - e^{m-1}}$ since $\xi_{i}$ is already homogeneous. We write $D = d^{n-1}
+ d^{m-1} - e^{m-1}$.

In other words, the generators of $I'_{m, n}$ are $\xi_{i}^{D}$ over all $i$. Now $$M' = S/(S\cap I_{Y}, \xi_{1}^{D}, \ldots, \xi_{2d-e-1}^{D})$$ and since
$$S = k[\xi_{1}, \ldots, \xi_{2d-e-1}]$$ this is manifestly a finite $k$-module.\end{proof}

\section{Integrality and Critical Integrality}\label{integrality}

The methods used to prove Theorem~\ref{isovalue} can also be used to prove an integrality result:

\begin{cor}\label{integral1}In Section~\ref{finiteness}, we can work over $\mathbb{Z}$ and obtain a finite $\mathbb{Z}$-module in the statement of
Theorem~\ref{isovalue}. In other words, if we redefine $R$ as $$\mathbb{Z}[\zeta_{1}, \ldots, \zeta_{2d-1-e}, \xi_{1}, \ldots, \xi_{2d-1-e}, a_{d-1}, \ldots, a_{1},
b_{d-e-1}, \ldots, b_{0}]$$ and $S$ as $$\mathbb{Z}[\xi_{1}, \ldots, \xi_{2d-1-e}]$$ without changing the grading or the defining generators for $I_{Y}$ and $I_{m,
n}$, then the module $M'$ is a finite $\mathbb{Z}$-module.\end{cor}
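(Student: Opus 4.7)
The plan is to verify that every step in the proof of Theorem~\ref{isovalue} is insensitive to the base ring and goes through verbatim with $\mathbb{Z}$ in place of $k$. The normalizations $a_{0} = 0$ and $a_{d} = b_{d-e} = 1$ are integral, so the slice $X$ and the ring $R$ are already defined over $\mathbb{Z}$. The grading is a combinatorial device that does not depend on the base, so Lemma~\ref{grade} transfers immediately: the generators of $I_{Y}$ are still homogeneous when read as polynomials in $\mathbb{Z}[\zeta_{i}, \xi_{i}, a_{j}, b_{k}]$.

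Next I would re-examine the reduction used in the proof of Theorem~\ref{isovalue}. The argument proceeds by the chain of implications: $M'$ is finite $\Longleftrightarrow$ every $h \in R$ can be written as $s + r$ with $s \in I_{m,n}$ and $\deg r \leq D$ $\Longleftrightarrow$ every homogeneous $h$ of degree $>D$ lies in $I'_{m,n}$. The point where the original proof invokes "the set of $x \in R$ with $\deg x \leq D$ is finite" is an informal way of saying that this set is a finitely generated module over the base. Over $\mathbb{Z}$ one just reads this as finitely generated as a $\mathbb{Z}$-module, which is exactly what is needed to conclude that $M'$ is a finite $\mathbb{Z}$-module. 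The passage from $I_{m,n}$ to the top-degree ideal $I'_{m,n}$ is a purely formal graded-ring manipulation and does not use that $k$ is a field.

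The computational heart is Lemma~\ref{topdegree}, and here one must check that leading coefficients are units in $\mathbb{Z}$. In the inductive step, the unique summand contributing to the top degree of $f_{n}(z)$ is $a_{d}(f_{n-1}(z))^{d}$, which has leading coefficient $a_{d} = 1$; similarly the top-degree summand of $g_{n}(z)$ is $b_{d-e}(f_{n-1}(z))^{d-e}(g_{n-1}(z))^{e}$ with leading coefficient $b_{d-e} = 1$. Consequently, after clearing denominators in $\varphi^{m}(\zeta_{i}) = \varphi^{n}(\zeta_{i})$ and extracting top-degree parts, the relations $\xi_{i}^{D} = 0$ with $D = d^{n-1} + d^{m-1} - e^{m-1}$ appear with coefficient exactly $1$, so $I'_{m,n} \supseteq (\xi_{1}^{D}, \ldots, \xi_{2d-1-e}^{D})$ integrally.

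Putting this together, $M'$ is a quotient of $\mathbb{Z}[\xi_{1}, \ldots, \xi_{2d-1-e}]/(\xi_{1}^{D}, \ldots, \xi_{2d-1-e}^{D})$, which is a free $\mathbb{Z}$-module of rank $D^{2d-1-e}$, so $M'$ is a finite $\mathbb{Z}$-module. The only place that requires genuine attention rather than mechanical substitution is the "finiteness" step above: one must confirm that the original proof never exploits anything about $k$ beyond finite generation of degree-bounded pieces. Since the leading coefficients in Lemma~\ref{topdegree} are already $1 \in \mathbb{Z}$, no hidden denominator is introduced, and the rest of the argument is insensitive to whether one works with coefficients in a field or in $\mathbb{Z}$.
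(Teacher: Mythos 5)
Your proposal is correct and takes essentially the same approach as the paper, which dispatches the corollary with the single observation that nowhere in the proof of Theorem~\ref{isovalue} are elements of $k$ inverted, and then concludes $M' = \mathbb{Z}[\xi_{1},\ldots,\xi_{2d-e-1}]/(\xi_{1}^{D},\ldots,\xi_{2d-e-1}^{D})$ is a finite $\mathbb{Z}$-module. You have simply unpacked that observation step by step---correctly noting that the normalizations $a_{d}=b_{d-e}=1$ put the key leading coefficients equal to $1\in\mathbb{Z}$, and correctly reading the paper's informal ``the set $\{x\in R:\deg x\leq D\}$ is finite'' as finite generation of the degree-bounded graded pieces over the base ring.
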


\begin{proof}Nowhere in the proof of Theorem~\ref{isovalue} do we invert elements of $k$. Therefore, we get $$M' = \mathbb{Z}[\xi_{1}, \ldots,
\xi_{2d-e-1}]/(\xi_{1}^{D}, \ldots, \xi_{2d-e-1}^{D})$$ which is finite as a $\mathbb{Z}$-module.\end{proof}

On less than careful reading, it may appear as if this implies that all PCF maps with a superattracting cycle are $p$-integral, i.e. have good reduction mod $p$,
wherever their reduction (good or bad!) is $p$-tamely ramified. However, in personal communication Rivera-Letelier gave,

\begin{ex}\label{jrl}The map $$\varphi(z) = -45\frac{3z + 5}{z^{2}(z-9)}$$ is PCF but has two $5$-adically repelling fixed points, of $5$-adic absolute values $1$
and $1/5$. Since this map's degree is only $3$, this cannot possibly come from wildly ramified reduction.\end{ex}

In fact, we can instead conclude a weaker result, regarding the integrality of the critical values. But first,

\begin{defn}\label{atr}Let $\varphi$ be defined over a local field $K$. We say that $\varphi$ is \textbf{absolutely tamely ramified} if, for each integral model of
$\varphi$ over $\mathcal{O}_{K}$, the reduction mod the maximal ideal is tamely ramified after clearing common factors of the reductions of $f$ and $g$. If
$\varphi$ is defined over a global field, we say it is absolutely tamely ramified at a place if it is absolutely tamely ramified when we regard it as a map over the
completion.\end{defn}

\begin{rem}An equivalent definition for absolute tame ramification is that the local degree at each $\mathbb{P}^{1}$-disk is not divisible by the residue
characteristic of $K$. In particular, since composing maps multiplies local degrees, the composition of two absolutely tamely ramified maps, in particular the
iterate of any absolutely tamely ramified map, is again absolutely tamely ramified.\end{rem}

When $\varphi$ is absolutely tamely ramified, we can talk about configurations of points more freely, since under any $\PGL(2,K)$-conjugate the map still has tamely
ramified reduction. Of course this reduction may be as bad as a degenerate constant map, but it will still have $2d-2$ critical points and values that we can map
the critical points and values of $\varphi$ to. Using Definition~\ref{atr}, we can obtain a more precise formulation of Corollary~\ref{integralmain}:

\begin{cor}\label{integral2}Let $\varphi(z)$ be PCF, defined over a local field, absolutely tamely ramified, and with a superattracting cycle. Then the reduction of
$\varphi$ modulo the maximal ideal has an integral configuration of critical values.\end{cor}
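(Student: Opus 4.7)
The plan is to derive Corollary~\ref{integral2} by specializing the universal integrality result of Corollary~\ref{integral1} to the given PCF map $\varphi$.

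First I would place $\varphi$ inside the slice $X$ of Section~\ref{finiteness}. After passing to a finite extension of the local field $K$ if necessary, I conjugate by an element of $\PGL(2,\overline{K})$ so that a chosen critical point in the superattracting cycle is moved to $\infty$ and some other $\varphi$-fixed point is moved to $0$; then I rescale to enforce $a_d = b_{d-e} = 1$. When the superattracting cycle has period $r > 1$, I first replace $\varphi$ by the iterate $\varphi^r$, which is again PCF with a fixed critical point at the chosen cycle point and is still absolutely tamely ramified by the remark following Definition~\ref{atr}. This realizes the (possibly iterated) map as a $\overline{K}$-point of $X$.

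Second, the normalized $\varphi$ then defines a $\mathbb{Z}$-algebra homomorphism $\mathcal{O}_Y \to \overline{K}$ sending each $\zeta_i$, $\xi_i$, $a_i$, $b_j$ to its actual value. Because $\varphi$ is PCF with the given critical portrait, this map factors through $\mathcal{O}_Y / I_{m,n}$, and its restriction to $S = \mathbb{Z}[\xi_1,\ldots,\xi_{2d-1-e}]$ factors through the image ring $M'$. By Corollary~\ref{integral1}, $M'$ is a finite $\mathbb{Z}$-module, so every generator $\xi_i \in M'$ satisfies a monic polynomial with $\mathbb{Z}$-coefficients. Monic integrality relations transfer under ring homomorphisms, so each $\xi_i(\varphi) \in \overline{K}$ is an algebraic integer and therefore lies in the valuation ring $\mathcal{O}_{\overline{K}}$ extending the local valuation on $K$. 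Consequently every critical value reduces to a well-defined point of the residue field, which is exactly the desired integral configuration.

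The main obstacle I foresee lies in the coordinate-change and iteration steps. One has to verify that the $\PGL(2,\overline{K})$-conjugation placing $\varphi$ into $X$ yields coordinates in which \emph{integral configuration of critical values} has its intended meaning; the absolute tame ramification hypothesis is precisely what permits free passage between $\PGL(2,\overline{K})$-equivalent integral models without losing tameness. A subtler issue arises when the superattracting cycle has period $r > 1$: the critical values of $\varphi^r$ at a critical point $\zeta$ of $\varphi$ are the $(r-1)$-fold forward $\varphi$-iterates of $\varphi(\zeta)$, so one must argue separately that integrality of the configuration for $\varphi^r$ transfers back to integrality of the configuration for $\varphi$ itself. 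Finally, one should emphasize---consistent with Example~\ref{jrl}---that the argument uses nothing about the coefficients $a_i, b_j$, so $\varphi$ itself may still have bad reduction even when its critical value configuration is integral.
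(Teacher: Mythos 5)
Your proof is correct and follows the same strategy as the paper's: realize $\varphi$ (after normalization) as a $\overline{K}$-point of $X$, factor the resulting specialization $S \to \overline{K}$ through the ring $M'$ of Corollary~\ref{integral1}, and conclude from the finiteness of $M'$ over $\mathbb{Z}$ that each $\xi_i$ is an algebraic integer. The paper compresses all of this into the single assertion that ``$\varphi \in M' \otimes \mathcal{O}_K$,'' so your expansion of the normalization and factorization steps is an accurate unpacking of the same argument rather than a different route.

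The one loose end you flag --- transferring integrality of the configuration from $\varphi^r$ back to $\varphi$ when the superattracting cycle has period $r > 1$ --- is real but can be closed directly, and it is worth recording since the paper itself is silent on the iterate reduction. The chain rule gives $\mathrm{Crit}(\varphi^r) = \bigcup_{i=0}^{r-1}\varphi^{-i}(\mathrm{Crit}(\varphi))$, and applying $\varphi^r$ shows the critical value set of $\varphi^r$ is $\bigcup_{j=0}^{r-1}\varphi^{j}(\mathrm{CritValue}(\varphi))$. In particular $\mathrm{CritValue}(\varphi)$ is exactly the $j=0$ stratum of $\mathrm{CritValue}(\varphi^r)$, so integrality for $\varphi^r$ gives integrality for $\varphi$ with no further work. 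Your description of the critical values of $\varphi^r$ as only the $(r-1)$-fold forward iterates of $\varphi(\zeta)$ accounts for just the critical points of $\varphi^r$ that are already critical for $\varphi$; you are missing the preimage critical points $\varphi^{-i}(\mathrm{Crit}(\varphi))$ for $i>0$, which are precisely what contribute the lower iterates $\varphi^{j}(\mathrm{CritValue}(\varphi))$, $j<r-1$, including $j=0$. With that observation added, the proposal is complete and slightly more careful than the printed proof.
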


\begin{proof}Let $K$ be the local field of definition and $\mathcal{O}_{K}$ its ring of integers. Then $\varphi \in M' \otimes \mathcal{O}_{K}$, a finite
$\mathcal{O}_{K}$-module. Thus the critical values of $\varphi$, i.e. $\xi_{1}, \ldots, \xi_{2d-e-1}$, define a finite $\mathcal{O}_{K}$-module, and in particular
are $\mathcal{O}_{K}$-integral.\end{proof}

\begin{rem}As with Definition~\ref{atr}, Corollary~\ref{integral2} also applies to maps defined over global fields.\end{rem}

In Lemma~\ref{crit}, a configuration of critical values in characteristic $p$ may not correspond to a cover of $\mathbb{P}^{1}$ because the lift of a tamely
ramified cover of $\mathbb{P}^{1}$ to characteristic $0$ may have bad reduction; on the level of groups, the tame fundamental group is the prime-to-$p$ profinite
completion of the free group on $\#\{\xi_{1}, \ldots, \xi_{2d-e-1}\}$ generators, whereas the characteristic-$0$ fundamental group is the full profinite completion,
which has more quotients and thus more actions. The interpretation of the interplay between Corollary~\ref{integral2} and the existence of non-integral PCF maps as
in Example~\ref{jrl} is that those maps are precisely the bad reduction lifts that prevent Riemann existence from holding verbatim in characteristic $p$.

For the same reason, we cannot count PCF maps in characteristic $p$ in the same way as maps in characteristic $0$. In our case of interest---that is, tamely
ramified maps with a superattracting cycle---we can lift each characteristic $p$ PCF configuration of critical values to characteristic $0$ and then lift all
characteristic $p$ PCF maps, but in the opposite direction we may encounter bad reduction. In particular, there are fewer characteristic $p$ PCF maps than
characteristic $0$ ones.

The situation for polynomials is much simpler. Not only do we not need $S$ in the proof of Theorem~\ref{isovalue}, but also we obtain:

\begin{thm}Let $f(z)$ be an absolutely tamely ramified PCF polynomial. Then $f$ has good reduction.\end{thm}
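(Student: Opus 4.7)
The plan is to run the polynomial specialization of the proof of Theorem~\ref{isovalue}, noting that the superattracting cycle hypothesis of Theorem~\ref{main} is automatic for polynomials (since $\infty$ is a totally ramified fixed point), and then upgrade the integrality conclusion of Corollary~\ref{integral2} from the critical values alone to the full list of coefficients. For a polynomial normalized as in $X$, good reduction means exactly that the coefficients $a_1, \ldots, a_{d-1}$ lie in $\mathcal{O}_K$, so what is needed is integrality of each $a_i$ and not merely of the $\xi_i$.

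First I would specialize Section~\ref{finiteness} by setting $e = d$ and $g \equiv 1$, with the normalizations $a_d = 1$ and $a_0 = 0$. The grand graded ring becomes $R = \mathbb{Z}[\zeta_1, \ldots, \zeta_{d-1}, \xi_1, \ldots, \xi_{d-1}, a_1, \ldots, a_{d-1}]$ with $\deg \zeta_i = 1$, $\deg \xi_i = d$, and $\deg a_i = d - i$, modulo the critical-point equation $f'(z) = d\prod(z - \zeta_i)$ together with the critical-value relations $f(\zeta_i) = \xi_i$. Lemma~\ref{grade} and Lemma~\ref{topdegree} carry over unchanged, and the exponent $D = d^{n-1} + d^{m-1} - e^{m-1}$ collapses to $d^{n-1}$, so $I'_{m,n} = (\xi_1^{d^{n-1}}, \ldots, \xi_{d-1}^{d^{n-1}})$. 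By the nilpotence argument of Theorem~\ref{isovalue} and Corollary~\ref{integral1}, the critical-value image ring $M'$ is a finite $\mathbb{Z}$-module.

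The key polynomial-specific strengthening is that the entire ring $\mathcal{O}_Y/I'_{m,n}$---and not merely the image $M'$ of $S$---is a finite $\mathbb{Z}$-module. In the rational case Lemma~\ref{crit} had to absorb a residual right-action of $\PGL(2)$; for polynomials the normalizations $a_d = 1$ and $a_0 = 0$ are preserved only by the finite group $\{z \mapsto tz : t^{d-1} = 1\}$, so Riemann existence for polynomial covers of $\mathbb{P}^1$ branched over $\{\xi_1, \ldots, \xi_{d-1}, \infty\}$ shows that $\mathcal{O}_Y$ is already finite over $S$ without any further normalization. Consequently $\mathcal{O}_Y/I'_{m,n}$ is finite over $M'$, hence a finite $\mathbb{Z}$-module, and every coefficient $a_i$ lies in it. Tensoring with $\mathcal{O}_K$ preserves finiteness, so each $a_i \in K$ is integral over $\mathcal{O}_K$, hence in $\mathcal{O}_K$ by integral closedness, giving good reduction.

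The main obstacle is justifying that $\mathcal{O}_Y$ is finite over $S$ as a $\mathbb{Z}$-algebra, not merely after inverting $d$: one must show that marking the critical points of an absolutely tamely ramified polynomial and specifying the critical value configuration determines the polynomial up to finitely many choices, integrally. The monodromy input reduces to the tame \'etale fundamental group of $\mathbb{P}^1$ minus the critical values and $\infty$, with inertia at $\infty$ acting as a $d$-cycle; the integrality input requires that the relation $f'(z) = d\prod(z - \zeta_i)$ can be used to recover the $a_i$ from the $\zeta_j$ without losing $p$-adic information, which is where absolute tame ramification---rather than ordinary tame ramification---is essential.
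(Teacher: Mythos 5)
Your approach is genuinely different from the paper's: the paper simply cites Theorem 7.1 of~\cite{Lev3}, whose proof works with local degrees on Berkovich disks, and observes that the hypothesis ``$d < p$ or a composition of such maps'' is only used there to force those local degrees to be prime to $p$, which absolute tame ramification gives directly. You instead try to push the algebraic machinery of Section~\ref{finiteness} to yield integrality of the coefficients $a_i$ rather than only of the critical values $\xi_i$. That would be a nice alternative if it worked, but there is a genuine gap at the step you describe as ``the main obstacle,'' and the obstacle is not surmountable within this framework.

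The gap is the claim that $\mathcal{O}_Y$ is finite over $S$ as a $\mathbb{Z}$-algebra. Riemann existence is a statement over an algebraically closed field: it bounds the number of covers, i.e.\ it controls the geometric fibers of $Y \to \Spec S$, and hence gives finiteness of $\mathcal{O}_Y$ over $S$ only after inverting the relevant primes. It does not give module-finiteness over $\mathbb{Z}$, and indeed module-finiteness fails. Take $d = 3$: the ring $\mathcal{O}_Y$ contains the relation $f'(z) = 3(z - \zeta_1)(z - \zeta_2)$, whose linear coefficient reads $2a_2 = -3(\zeta_1 + \zeta_2)$. Over $\mathbb{Z}[1/2]$ this determines $a_2$ from the $\zeta_j$, but modulo $2$ it degenerates to the constraint $\zeta_1 + \zeta_2 = 0$ and leaves $a_2$ a free variable, so $\mathcal{O}_Y \otimes \mathbb{F}_2$ has a positive-dimensional component and $\mathcal{O}_Y$ is not finite over $\mathbb{Z}[\zeta_1,\zeta_2]$, let alone over $S$. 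The finite automorphism group you invoke is a red herring here: the paper already reduces to a finite group of right-translations in the rational case (Lemma~\ref{crit}), and that reduction is about geometric fibers, not about integral module structure.

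Your second issue---recovering $a_i$ from $\zeta_j$ via $ia_i = \pm d\,\sigma_{d-i}(\zeta)$ without losing $p$-adic information---is the same phenomenon, and it is not cured by absolute tame ramification. Absolute tame ramification controls the multiplicities $e_j$ of the critical points and the local degree at $\infty$ (hence forces $p \nmid d$ and $p \nmid e_j$), but it says nothing about the indices $i$ appearing in $f'$. A degree-$3$ polynomial over $\mathbb{Q}_2$ can perfectly well be absolutely tamely ramified (all local degrees are $1$ or $3$), yet $i = 2$ kills the relation mod $2$. Since absolute tame ramification does not force $p > d$, this loss of $p$-adic information is unavoidable, and the algebraic grading argument that yields $\xi_i$-integrality cannot by itself yield $a_i$-integrality. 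To close the gap one must import the non-archimedean analysis of~\cite{Lev3}, which is precisely what the paper does.
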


\begin{proof}See Theorem 7.1 in~\cite{Lev3}. The theorem's statement is weaker, assuming that $f$ is of degree $d < p$ or is the composition of maps of degree $d <
p$, but this assumption is only used to establish that the local degrees on certain disks are never divisible by $p$, and for that it suffices to assume absolute
tame ramification.\end{proof}

\bibliographystyle{amsplain}
\bibliography{rigidity}

\bigskip\noindent \sc{Alon Levy---Department of Mathematics, UBC, Vancouver, BC, Canada}

\noindent \tt{email: levy@math.ubc.ca}

\end{document}